\numberwithin{equation}{section}
\newtheorem{theorem}{Theorem}
\newtheorem{lemma}{Lemma}[section]
\newtheorem{proposition}{Proposition}[section]
\theoremstyle{definition}
\newtheorem{definition}{Definition}[section]
\newtheorem{remark}{Remark}[section]
\newtheorem{example}{Example}[section]
\newcommand{\supp}{\operatorname*{supp}}
\newcommand{\mP}{\mathbb{P}}
\newcommand{\vep}{\varepsilon}
\newcommand{\vphi}{\varphi}
\newcommand{\LIM}{\operatorname*{LIM}}
\newcommand{\lv}{\left\lvert}
\newcommand{\lV}{\left\lVert}
\newcommand{\rv}{\right\rvert}
\newcommand{\rV}{\right\rVert}
\newcommand{\N}{\mathbb{N}}
\newcommand{\opnorm}{\@ifstar\@opnorms\@opnorm}
\newcommand{\@opnorms}[1]{%
  \left|\mkern-1.5mu\left|\mkern-1.5mu\left|
   #1
  \right|\mkern-1.5mu\right|\mkern-1.5mu\right|
}
\newcommand{\@opnorm}[2][]{%
  \mathopen{#1|\mkern-1.5mu#1|\mkern-1.5mu#1|}
  #2
  \mathclose{#1|\mkern-1.5mu#1|\mkern-1.5mu#1|}
}
\begin{document}

\baselineskip=17pt

\title[Invariant measure for Markov operator cocycles and random mean ergodic theorem]{Invariant measure for Markov operator cocycles and random mean ergodic theorem}

\author[F. Nakamura]{Fumihiko NAKAMURA}
\address[F. Nakamura]{Faculty of Engineering, Kitami Institute of Technology, Hokkaido, 090-8507, JAPAN}
\email[F. Nakamura]{nfumihiko@mail.kitami-it.ac.jp}

\author[H. Toyokawa]{Hisayoshi TOYOKAWA}
\address[H. Toyokawa]{Department of Mathematics, Hokkaido University, Hokkaido, 060-0810, JAPAN}
\email[H. Toyokawa]{toyokawa@math.sci.hokudai.ac.jp}

\begin{abstract}
In the present paper, we consider random invariant densities and the mean ergodic theorem for Markov operator cocycles which are applicable to quenched type random dynamical systems.
We give necessary and sufficient conditions for the existence of random invariant densities for Markov operator cocycles and establish the mean ergodic theorem for generalized linear operator cocycles over a weakly sequentially complete Banach space.
The advantage of the result is that we show the implication of weak precompactness for almost every environment to strong convergence in the global sense.
\end{abstract}

\subjclass[2010]{37H05  ; 37A25 }
\keywords{Markov operator cocycles, invariant measure, mean ergodic theorem, random dynamical systems}

  \maketitle

\section{Introduction}
This paper concerns a cocycle generated by Markov operators, called a {\it Markov operator cocycle}. 
Let $(X,\mathcal{A},m)$ be a probability space and  $L^1 (X, m)$ the space of all $m$-integrable functions on $X$ endowed with the $L^1$-norm  $\Vert \cdot \Vert _{L^1(X)}$.
An operator $P: L^1(X, m)\to L^1 (X, m)$ is called a \emph{Markov operator} if $P$ is linear, positive (i.e., $Pf\geq 0$ $m$-almost everywhere if $f\geq 0$ $m$-almost everywhere) and $P$ preserves the mean in the sense that
\begin{align*}
\int _X Pf  dm  = \int _Xf dm \quad
\text{ for all $f \in L^1(X, m)$} .
\end{align*}
Markov operators naturally appear in the study of dynamical systems as Perron--Frobenius operators, Markov processes as integral operators with stochastic kernels of the processes, and annealed type random dynamical systems as integrations of Perron--Frobenius operators over environmental parameters (see \cite{F,LM} for details). 
  
A Markov operator cocycle is given by compositions of potentially different  Markov operators which are provided with  the environment $\{ \sigma ^n\omega\} _{n\geq 0}$ driven by a measure-preserving  transformation $\sigma$ on a probability space $(\Omega , \mathcal F, \mathbb P)$, 
\begin{align*}
\mathbb N \times \Omega \times L^1(X,m) \to  L^1(X,m):  (n, \omega , f) \mapsto P_{\sigma ^{n-1}\omega} \circ P_{\sigma ^{n-2}\omega} \circ \cdots \circ P_{\omega} f
\end{align*}
(see Definition \ref{dfn:1} for more precise description). 
Thus, it essentially possesses two kinds of randomness:
\begin{enumerate}
\item[(i)] The evolution of densities at each time are governed by Markov operators $P_\omega$;
\item[(ii)] The selection of  each Markov operators  is according to the base dynamics $\sigma$.
\end{enumerate}
Thus, by considering Markov operator cocycles, we expect to understand more complicated phenomena in multi-stochastic systems. 
The study of Markov operator cocycles follows measurable random dynamical systems in the sense of \cite{A}. We also refer to \cite{NNT}.

In this paper, we first give necessary and sufficient conditions for the existence of a random invariant measure (density) for a given Markov operator cocycle (Theorem \ref{thmA}).
As a density function of an absolutely continuous invariant measure, called an invariant density, for a deterministic dynamical system plays an important role in ergodic theory, the existence of a random invariant density for a Markov operator cocycle is also an important topic.
See \cite{BBR,Bu,Dr} for the analysis of statistical properties, including mixing rate and several limit theorems, for random dynamical systems via random invariant densities.
One of checkable criteria for the existence of an invariant density for a single Markov operator $P$ is weak precompactness of the trajectory of an initial distribution $f$: $\{P^nf\}_n$ (see \cite{T}).
For random dynamical systems, a few sufficient conditions for the existence of random invariant measures are known.
For instance, Crauel \cite{C1,C2} showed the existence by conditional expectations on the ``future'' sub-$\sigma$-algebra based on the martingale convergence theorem for random dynamical systems. Moreover, Dragi\v{c}evi\'{c} et.~al.~\cite{Dr} showed the existence of invariant measures for random transfer operators under the assumption called ``admissible cocycle.''
Our approach in the present paper succeeds to give a new criterion for the existence of random invariant densities for Markov operator cocycles, and leads to the density for random non-Markov fibered systems (see Section \ref{eg_nMFS}).

Furthermore, we prove the mean ergodic theorem for a linear operator cocycle on a general Banach space (Theorem \ref{MET}). The conventional mean ergodic theorem provides that the average of the sequence $\{P^nf\}_n$ converges in strong sense, and the limit point becomes an invariant density.
The classical mean ergodic theorem for a single linear operator by von Neumann deals only with a reflexive Banach space, and after that, Yosida and Kakutani \cite{YK} generalized the theorem to the case of a general Banach space under the assumption of weak precompactness of Ces\`{a}ro average of time evolutions.
As known in \cite{Beck}, the theorem for a linear operator cocycle is fulfilled if the Banach space is reflexive.
Then, giving an appropriate definition of weak precompactness for the cocycle, we succeeded to obtain a general result for the mean ergodic theorem of linear operator cocycles, that guarantees the existence of invariant measures for linear operator cocycles.

The organization of this paper is as follows.
In the rest of Section 1, we recall our setting and state our main theorems.
In Section 2, we introduce random non-Markov fibered systems with certain properties and apply Theorem \ref{thmA} to the systems.
In Section 3, we prepare some lemmas and relate the lift operator to skew product transformations.
Section 3, 4 and 5 are devoted to the proofs of Theorem \ref{thmA}, Proposition \ref{prop} and Theorem \ref{MET} respectively.

\subsection{Definitions: Markov operator cocycles and random invariant density}

In this subsection, we recall the definition of linear/Markov operator cocycles and their random invariant densities.

Let $D(X, m)$ be a set of all density functions, i.e., a subset of $L^1 (X, m)$ defined by
\begin{align*}
D(X, m) =\left\{f\in L^1(X, m) : f\geq 0 \ m \text{-almost everywhere and} \lV f\rV _{L^1(X)}=1 \right\}.
\end{align*}
Note that $P:L^1(X,m)\to L^1(X,m)$ is a Markov operator  if and only if  $P(D(X,m)) \subset D(X,m)$.
One of the most important examples of Markov operators is the \emph{Perron--Frobenius operator} induced by a measurable and non-singular transformation $T:X\to X$
(that is, the probability measure $m\circ T^{-1}$ is absolutely continuous with respect to $m$).
The Perron--Frobenius operator $\mathcal L_T:L^1(X,m)\to L^1(X,m)$ of $T$ is defined by 
\begin{equation}\label{eq:0913}
\int_X \mathcal L_T f\cdot g dm = \int_X f\cdot g\circ T dm \quad\quad \text{for }f\in L^1(X,m) \text{ and }g\in L^{\infty}(X,m).
\end{equation}

Let $(\Omega, \mathcal F , \mathbb{P})$ be a probability space and $\sigma:\Omega\to\Omega$ be a $\mathbb P$-preserving transformation.
Throughout the paper, we further assume $\sigma$ is invertible and ergodic (i.e., $\sigma^{-1}E=E\pmod{\mP}$ implies $E=\emptyset$ or $\Omega\pmod{\mP}$).
For a measurable space $\Sigma$,  we say that a measurable map $\Phi: \mathbb N_0 \times \Omega \times \Sigma \to \Sigma$ is a \emph{random dynamical system} on $\Sigma$ over the driving system $\sigma$ if
\begin{align*}
\varphi ^{(0)} _\omega = \mathrm{id} _{\Sigma}  \quad\text{and}\quad \varphi ^{(n+m)} _ \omega  = \varphi ^{(n)}_{ \sigma ^m\omega }\circ  \varphi ^{(m)}_\omega
\end{align*}
for each $n, m \in \mathbb N_0$ and $\omega \in \Omega$, with the notation $\varphi ^{(n)}_\omega =\Phi (n,\omega ,\cdot )$ and $\sigma \omega =\sigma (\omega )$, where $\mathbb N_0 =\mathbb N \cup \{0\}$. 
A standard reference for random dynamical systems is the monograph by Arnold \cite{A}. 
It is easy to check that 
\begin{equation}\label{eq:0220b2}
\varphi ^{(n)}_\omega = \varphi _{\sigma ^{n-1}\omega }\circ \varphi _{\sigma ^{n-2}\omega } \circ \cdots \circ \varphi _\omega 
\end{equation}
with the notation $\varphi _\omega = \Phi (1, \omega , \cdot )$.
Conversely, for each measurable map $\varphi : \Omega \times \Sigma\to \Sigma: (\omega , x) \mapsto \varphi _\omega (x)$, the measurable map $(n,\omega , x) \mapsto \varphi _\omega ^{(n)}(x)$ given by \eqref{eq:0220b2} is a random dynamical system. 
We call it a random dynamical system induced by $\varphi$ over $\sigma$, and simply denote it by $(\varphi , \sigma )$.
When $\Sigma$ is a Banach space (with its Borel measurable sets from its strong norm) and $\varphi _\omega :\Sigma \to \Sigma$ is linear $\mathbb P$-almost everywhere, $(\varphi , \sigma )$ is called a \emph{linear operator cocycle}. 
We give a formulation of Markov operators in random environments in terms of linear operator cocycles.

\begin{definition}\label{dfn:1}
A linear operator cocycle $(P, \sigma )$ induced by a measurable map $P: \Omega \times L^1(X,m) \to L^1(X,m)$ over $\sigma$ is called a \emph{Markov operator cocycle} (or a \emph{Markov operator in random environments}) if $P_\omega =P(\omega ,\cdot ): L^1 (X,m) \to L^1 (X,m)$ is a Markov operator for $\mathbb P$-almost every $\omega \in \Omega$.
\end{definition}

Suppose that $(n, \omega , f) \mapsto P^{(n)}_\omega f$ is a Markov operator cocycle induced by
$P: \Omega \times L^1(X,m) \to L^1(X,m)$ such that $P_\omega =P(\omega ,\cdot )$  is the Perron--Frobenius operator $\mathcal L_{T_\omega}$ associated with a non-singular map $T_\omega :X\to X$ for  $\mathbb P$-almost every $\omega$.
Then it follows from \eqref{eq:0913} that for $\mathbb P$-almost every $\omega\in\Omega$,
\begin{align}\label{eq:0913b}
\int_X \mathcal L_{T_\omega}^{(n)}f\cdot g dm = \int_X f\cdot g\circ T_\omega^{(n)} dm, \quad \text{for }f\in L^1(X,m) \text{ and } g\in L^{\infty}(X,m),
\end{align}
where $T_\omega^{(n)} = T_{\sigma ^{n-1}\omega }\circ T_{\sigma ^{n-2}\omega }\circ \cdots \circ T_\omega$.

Now we recall the definition of random invariant densities for Markov operator cocycles, which is the main concept of the paper.

\begin{definition}
A measurable map $h:\Omega\to L^1(X,m)$ with $h(\omega)=h_{\omega}$ is called a \emph{random invariant density}
if $h_\omega\in D(X,m)$ and $P_{\omega}h_{\omega}=h_{\sigma\omega}$ hold for $\mP$-almost every $\omega\in\Omega$.
\end{definition}

Note that the condition of invariance is equivalent to $h_{\omega}=P_{\sigma^{-1}\omega}h_{\sigma^{-1}\omega}$ for $\mP$-almost every $\omega\in\Omega$.
We give necessary and sufficient conditions for the existence of a random invariant density for a given Markov operator cocycle in Theorem \ref{thmA}.
Furthermore, in Theorem \ref{MET}, we will show the mean ergodic theorem for general linear (not necessarily Markov) operator cocycle, which guarantees the existence of a version of random invariant density.

\subsection{Definitions: The lift operator and weak precompactness}
 
We introduce our main tools in this paper: the lift operator $\mathscr{P}$ of a linear operator cocycle $(P,\sigma)$ and weak precompactness of functions in fiberwise and global sense
in order to construct a random invariant density for the linear operator cocycle.
We first prepare the Banach space of Bochner integrable functions over a Banach space $\mathfrak{X}$ (with norm $\lV\cdot\rV_{\mathfrak{X}}$) denoted by $L^1\left(\Omega,\mathfrak{X}\right)$, based on \cite{DU,IT}.
Then, we define the lift operator $\mathscr{P}$ over $L^1\left(\Omega,\mathfrak{X}\right)$ associated with the linear operator cocycle and relate it with a random invariant density.

Let us define
\begin{align*}
&\mathscr{L}^1\left(\Omega,\mathfrak{X}\right)=\left\{f:\Omega\to \mathfrak{X}, \text{strongly measurable and integrable}\right\},\\
&\mathscr{N}=\bigg\{f:\Omega\to \mathfrak{X}, \text{ strongly measurable and }\lV\vphi(\omega)\rV_{\mathfrak{X}}=0,\ \mP\text{-almost every }\omega\in\Omega \bigg\},
\end{align*}
where $f:\Omega\to \mathfrak{X}$ is called \emph{strongly measurable} provided that there exists a sequence of simple functions $f_n=\sum_{i=1}^{N}1_{F_i}v_i$ for some $N=N(n)\in\N$, $\{F_i=F_i(n):i=1,\dots,N\}\in\mathscr{F}$ and $\{v_i=v_i(n):i=1,\dots,N\}\subset \mathfrak{X}$ such that $\lim_{n\to\infty}\lV f(\omega)-f_n(\omega)\rV_{\mathfrak{X}}=0$ for $\mP$-almost every $\omega\in\Omega$.
Then we define
\begin{align*}
L^1\left(\Omega,\mathfrak{X}\right)\coloneqq\mathscr{L}^1\left(\Omega,\mathfrak{X}\right)/\mathscr{N}.
\end{align*}
It is known that if $\mathfrak{X}$ is weakly sequentially complete then so is $L^1\left(\Omega,\mathfrak{X}\right)$ and hence any weakly Cauchy sequence in $L^1(\Omega,\mathfrak{X})$ converges in $L^1\left(\Omega,\mathfrak{X}\right)$ (see \cite{Ta}).
Note that if $\mathfrak{X}=L^1(X,m)$ then $L^1\left(\Omega,L^1(X,m)\right)$ is isometric to $L^1(\Omega\times X,\mP\times m)$ (see Lemma \ref{cong}).
The space $L^1(\Omega,\mathfrak{X})$ is equipped with the usual norm $\opnorm{\cdot}_1$
given by
\begin{align*}
\opnorm{f}_1 \coloneqq\int_\Omega \left\lVert f_\omega\right\rVert_{\mathfrak{X}} d\mathbb{P}(\omega)\quad\text{for $f\in L^1(\Omega,\mathfrak{X})$}.
\end{align*}

Throughout the paper, we suppose that $P:\Omega\times\mathfrak{X}\to\mathfrak{X}$ is strongly measurable in the sense that the map for each $f\in\mathfrak{X}$, $P_{\cdot}f:\Omega\to\mathfrak{X}$ is $\mathcal{F}$-$\mathcal{B}_{\mathfrak{X}}$ measurable.
Then the lift operator of a given linear operator cocycle is defined as follows.

\begin{definition}\label{liftop}
For a linear operator cocycle $(P,\sigma)$ over a Banach space $\mathfrak{X}$ where $P_{\omega}:\mathfrak{X}\to\mathfrak{X}$ is bounded uniformly in $\omega$, the \emph{lift operator} $\mathscr{P}:L^1(\Omega,\mathfrak{X})\to L^1(\Omega,\mathfrak{X})$ is defined by
\begin{align*}
(\mathscr{P}f)(\omega)\coloneqq P_{\sigma^{-1}\omega}f_{\sigma^{-1}\omega}
\end{align*}
for $f\in L^1\left(\Omega,\mathfrak{X}\right)$ and $\mP$-almost every $\omega\in\Omega$
so that for each $n\in\N$ we have
\begin{align*}
\left(\mathscr{P}^nf\right) (\omega) =  P_{\sigma^{-n}\omega}^{(n)}f_{\sigma^{-n}\omega}
\end{align*}
for $\mP$-almost every $\omega\in\Omega$.
\end{definition}

\begin{remark}
(I)
The above lift operator is a well-defined bounded linear operator over $L^1(\Omega,\mathfrak{X})$.
Indeed, if $f:\Omega\to\mathfrak{X}$ is strongly measurable then the strong measurability of $\mathscr{P}f$ follows from Lemma A.6 in \cite{GQ}.
Moreover if $f,\tilde{f}\in L^1\left(\Omega,\mathfrak{X}\right)$ and $f-\tilde{f}\in\mathscr{N}$, then we have
\begin{align*}
\lV\mathscr{P}\left(f-\tilde{f}\right)(\omega)\rV_{\mathfrak{X}}
&=\lV P_{\sigma^{-1}\omega}\left(f_{\sigma^{-1}\omega}-\tilde{f}_{\sigma^{-1}\omega}\right)\rV_{\mathfrak{X}}\\
&\le M\lV f_{\sigma^{-1}\omega}-\tilde{f}_{\sigma^{-1}\omega}\rV_{\mathfrak{X}}=0
\end{align*}
for $\mP$-almost every $\omega\in\Omega$ where $M$ is the supremum of the operator norm of $P_{\omega}$ and $\mathscr{P}f=\mathscr{P}\tilde{f}$ $\mP$-almost everywhere.
We also have
\begin{align*}
\opnorm{\mathscr{P}f}_1 =\int_\Omega \left\lVert P_{\sigma^{-1}\omega}f_{\sigma^{-1}\omega}\right\rVert_{\mathfrak{X}} d\mathbb{P}(\omega)
\leq \int_\Omega M\left\lVert f_{\sigma^{-1}\omega}\right\rVert_{\mathfrak{X}} d\mathbb{P}(\omega) = M\opnorm{f}_1,
\end{align*}
which implies that $\mathscr{P}$ is a bounded operator.
In particular, if $\lV P_{\omega}\rV\le 1$ for $\mP$-almost every $\omega\in\Omega$ then $\mathscr{P}$ is a contraction operator over $L^1(\Omega,\mathfrak{X})$.

(II)
We note that $h\in L^1\left(\Omega,D(X,m)\right)$ is a random invariant density if and only if $\mathscr{P}h=h$ (see Proposition \ref{propA} (2) more precisely).
\end{remark}

Recall that a subset $\mathscr{F}\subset L^1(X,m)$ is called weakly precompact if for any sequence $\{f_n\}_n\subset\mathscr{F}$ there is a further subsequence $\{f_{n_k}\}_k$ which converges weakly in $L^1(X,m)$.
Now we define weak precompactness in $L^1(\Omega,\mathfrak{X})$ in two senses. 

\begin{definition}\label{wpc}
A set $\mathscr{F}\subset L^1(\Omega,\mathfrak{X})$ is called \emph{fiberwise weakly precompact} if for every sequence $\{f_n\}_n\subset \mathscr{F}$, there exists $h\in L^1(\Omega,\mathfrak{X})$ such that for $\mathbb{P}$-almost every $\omega\in\Omega$, there exists a subsequence $\{n_k\}_k=\{n_k(\omega)\}_k\subset \mathbb{N}$ such that $\{(f_{n_k})(\omega)\}_k$ converges weakly to $h(\omega)$.

A set $\mathscr{F}\subset L^1(\Omega,\mathfrak{X})$ is called \emph{globally weakly precompact} if for every sequence $\{f_n\}_n\subset \mathscr{F}$, there is a further subsequence $\{f_{n_k}\}_k$ which converges weakly in $L^1(\Omega,\mathfrak{X})$.
\end{definition}

\begin{remark}
(I) 
In Definition \ref{wpc}, the weak topology on $L^1(\Omega,\mathfrak{X})$ is induced by an abstract dual space of $L^1(\Omega,\mathfrak{X})$.
However, It is known that $L^1(\Omega,\mathfrak{X})^*\simeq L^{\infty}(\Omega,\mathfrak{X}^*)$ if and only if the dual space $\mathfrak{X}^*$ has the Radon--Nikod\'{y}m property (see Theorem 1 \cite[page 98]{DU} more precisely).

(II)
We immediately find that if $\mathscr{F}$ is globally weakly precompact in $L^1(\Omega,\mathfrak{X})$, then there is some limit point $h\in L^1(\Omega,\mathfrak{X})$ and a subsequence $\{n_k\}_k$ (independent of $\omega$) such that $f_{n_k}(\omega)$ converges weakly to $h(\omega)$. Thus, the global weak precompactness implies the fiberwise weak precompactness.
Though the converse is non-trivial, we can show the converse implication by Theorem \ref{thmA} ((6) $\Rightarrow$ (4)) for the case when $\mathfrak{X}=L^1(X,m)$ and by the proof of Theorem \ref{MET} for the case when $\mathfrak{X}$ is a weakly sequentially complete Banach space.
We also note that if $\mathfrak{X}=L^1(X,m)$ then global weak precompactness in $L^1(\Omega,\mathfrak{X})$ is equivalent to the conventional weak precompactness in $L^1(\Omega\times X,\mP\times m)$ (see also Lemma \ref{cong}).
\end{remark}

\subsection{Main theorem \ref{thmA}: Random invariant densities for Markov operator cocycles}

Let $(\Omega,\mathcal{F},\mP)$ and $(X,\mathcal{A},m)$ be probability spaces as a parameter space and a phase space, respectively, and $\sigma$ be an invertible, ergodic and $\mathbb{P}$-preserving transformation on $\Omega$.
We assume $\Omega$ and $X$ are Polish spaces, namely complete and separable metric spaces. For Theorem \ref{thmA}, we consider the case $\mathfrak{X}=L^1(X,m)$.

The first main theorem which gives necessary and sufficient conditions for the existence of a random invariant density for a Markov operator cocycle $(P,\sigma)$ is as follows.

\begin{theorem}\label{thmA}
Let $(\Omega,\mathcal{F},\mP)$ and $(X,\mathcal{A},m)$ be probability spaces with Polish spaces $\Omega$ and $X$, $\sigma$ be an invertible $\mathbb{P}$-preserving ergodic transformation on $\Omega$, $(P,\sigma)$ be a Markov operator cocycle and $\mathscr{P}$ be its lift operator over $L^1\left(\Omega,L^1(X,m)\right)$.
Then the following are equivalent:
\begin{enumerate}
\item
There exists a finite invariant density $h\in L^1\left(\Omega,L^1(X,m)\right)$ for the Markov operator cocycle such that $\mathscr{P}^{*n}1_{\supp h}(\omega,x)$ monotonically tends to $1$ for $\mP\times m$-almost every $(\omega,x)\in\Omega\times X$,
where the adjoint operator $\mathscr{P}^*$ is considered over $L^{\infty}(\Omega\times X,\mP\times m)$;
\item
For any Banach limit $\LIM$, a set function defined by
\begin{align}\label{eqbl}
\mu_{\omega} (E) \coloneqq \LIM \left(\left\{ \int_E  P_{\sigma^{-n}\omega}^{(n)}1_X dm \right\}_n\right)
\end{align}
is an absolutely continuous probability measure on $(X,\mathcal{A})$ with $P_{\omega}\frac{d\mu_{\omega}}{dm}$ $=\frac{d\mu_{\sigma\omega}}{dm}$ for $\mP$-almost every $\omega\in\Omega$;
\item
$\left\{\mathscr{P}^n1_{\Omega\times X}\right\}_n\subset L^1(\Omega\times X,\mP\times m)$ is weakly precompact;
\item 
$\mathscr{P}$ is weakly almost periodic (i.e. $\left\{\mathscr{P}^n f\right\}_n\subset L^1(\Omega\times X,\mP\times m)$ is weakly precompact for any $f\in L^1(\Omega\times X,\mP\times m)$);
\item
For $\mP$-almost every $\omega\in\Omega$, it holds that for any $f\in L^1(X,m)$
\begin{align*}
\lim_{n\to\infty}\frac{1}{n}\sum_{i=0}^{n-1} P_{\sigma^{-n}\omega}^{(n)}f
\end{align*}
exists in strong sense;
\item
For $\mP$-almost every $\omega\in\Omega$, $\{ P_{\sigma^{-n}\omega}^{(n)}1_X\}_n\subset L^1(X,m)$ is weakly precompact;
\item
For $\mP$-almost every $\omega\in\Omega$, $\{ P_{\sigma^{-n}\omega}^{(n)}f\}_n\subset L^1(X,m)$ is weakly precompact for any $f\in L^1(X,m)$.
\end{enumerate}
\end{theorem}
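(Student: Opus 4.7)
The strategy is to observe that the lift operator $\mathscr{P}$ is itself a Markov operator on $L^1(\Omega\times X,\mP\times m)$ via the isometry of Lemma~\ref{cong}, so that the classical toolkit of Markov operator theory (Banach limits, the Dunford--Pettis criterion, the Yosida--Kakutani mean ergodic theorem) applies directly to $\mathscr{P}$. The real work then lies in translating between global statements about $\mathscr{P}$ on $\Omega\times X$ and fiberwise statements about $P^{(n)}_{\sigma^{-n}\omega}$. My plan is to split the seven conditions into a global block $\{(1),(2),(3),(4)\}$ and a fiberwise block $\{(5),(6),(7)\}$, close each block internally, and then close them into a single cycle via one bridging implication.

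Within the global block, the core is the equivalence (3)$\Leftrightarrow$(4): this is the standard fact that weak almost periodicity of a positive $L^1$-contraction is detected on the constant function $1$, since $\lvert\mathscr{P}^n f\rvert\le\lVert f\rVert_\infty\mathscr{P}^n1$ for bounded $f$, and Dunford--Pettis plus density extends uniform integrability from $\{\mathscr{P}^n 1\}_n$ to every orbit. The implication (3)$\Rightarrow$(2) proceeds by taking a weak subsequential limit $\mathscr{P}^{n_k}1\rightharpoonup\tilde h$, which dominates the Banach-limit set function $\mu_\omega(E)=\LIM\{\int_E P^{(n)}_{\sigma^{-n}\omega}1_X\,dm\}_n$ in $E$ and so enforces absolute continuity and countable additivity, while the invariance $P_\omega(d\mu_\omega/dm)=d\mu_{\sigma\omega}/dm$ is a consequence of the shift-invariance of $\LIM$ combined with the identity $\mathscr{P}\mathscr{P}^n=\mathscr{P}^{n+1}$. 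Then (2)$\Rightarrow$(1) sets $h_\omega=d\mu_\omega/dm$ and reads off the monotone convergence $\mathscr{P}^{*n}1_{\supp h}\nearrow 1$ from the probability-measure property of each $\mu_\omega$ together with invariance; conversely (1)$\Rightarrow$(3) bounds $\mathscr{P}^n1_{\Omega\times X}$ above by a uniformly integrable envelope built from iterates of $h$, using the monotone domination from the support condition.

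Within the fiberwise block, (7)$\Rightarrow$(6) is trivial and (6)$\Rightarrow$(7) together with (5)$\Rightarrow$(6) are standard Dunford--Pettis/density arguments carried out fiberwise. The implication (4)$\Rightarrow$(5) is a direct application of the Yosida--Kakutani mean ergodic theorem to $\mathscr{P}$, translated back to fibers through $\mathscr{P}^i f(\omega)=P^{(i)}_{\sigma^{-i}\omega}f_{\sigma^{-i}\omega}$ with $f$ taken constant in the fiber variable. So everything reduces to one bridge closing the cycle, which I take to be (6)$\Rightarrow$(3).

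This bridge is the main obstacle, and is precisely the non-trivial direction flagged in the remark following Definition~\ref{wpc}: fiberwise weak precompactness does not a priori give global weak precompactness, because the extracted subsequences depend on $\omega$ and there is no automatic uniform-in-$\omega$ modulus of uniform integrability. I would handle it by encoding fiberwise weak precompactness as a pointwise modulus $\eta(\omega,M)=\sup_n\int_{\{P^{(n)}_{\sigma^{-n}\omega}1_X>M\}} P^{(n)}_{\sigma^{-n}\omega}1_X\,dm$ satisfying $\eta(\omega,M)\to 0$ as $M\to\infty$ for $\mP$-almost every $\omega$, with the uniform bound $\eta(\omega,M)\le 1$; measurability in $\omega$ would follow from the Polish structure of $\Omega$ and $X$ together with strong measurability of $P$. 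Dominated convergence then upgrades this pointwise convergence to a global modulus of uniform integrability on $\Omega\times X$, and Dunford--Pettis on the product space delivers weak precompactness of $\{\mathscr{P}^n1_{\Omega\times X}\}_n$. With this bridge in place the cycle closes and the seven conditions are mutually equivalent.
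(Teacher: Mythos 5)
Your reorganization of the seven conditions into a global block and a fiberwise block is sensible, and the bridge $(6)\Rightarrow(3)$ that you propose is both correct and genuinely more elementary than the paper's. The paper proves $(6)\Rightarrow(3)$ (via Lemma~\ref{XX} and Lemma~\ref{wp}) by first upgrading fiberwise weak precompactness to weak precompactness of $\{\omega\mapsto P_{\sigma^{-n}\omega}^{(n)}1_X\}_n$ in $L^1(\Omega,L^1(X,m))$ using Ghenciu's Corollary~2.4(i) in \cite{G} together with Talagrand's theorem on weak sequential completeness of $L^1(\Omega,\mathfrak{X})$. Your approach instead exploits that $\mathfrak{X}=L^1(X,m)$ concretely: from fiberwise Dunford--Pettis you extract a modulus $\eta(\omega,M)=\sup_n\int_{\{P_{\sigma^{-n}\omega}^{(n)}1_X>M\}}P_{\sigma^{-n}\omega}^{(n)}1_X\,dm$, note that $\eta(\omega,M)\le 1$ and $\eta(\omega,M)\to 0$ as $M\to\infty$ pointwise, and then dominated convergence in $\omega$ plus
\begin{align*}
\sup_n\int_{\{\mathscr{P}^n1_{\Omega\times X}>M\}}\mathscr{P}^n1_{\Omega\times X}\,d(\mP\times m)\le\int_\Omega\eta(\omega,M)\,d\mP(\omega)
\end{align*}
gives global uniform integrability, hence $(3)$ by Dunford--Pettis. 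This bypasses both \cite{G} and \cite{Ta} entirely; measurability of $\eta(\cdot,M)$ is indeed a countable supremum of measurable functions under the standing strong measurability assumption on $P$. This is a worthwhile simplification.

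However, there is a genuine gap in your treatment of the global block, specifically the cycle $(3)\Rightarrow(2)\Rightarrow(1)\Rightarrow(3)$. Condition $(1)$ is not merely the existence of an invariant density $h$; it carries the additional requirement that $\mathscr{P}^{*n}1_{\supp h}$ increases monotonically to $1$, which is a conservativity-type hypothesis on the support. Your $(2)\Rightarrow(1)$ claims to ``read off'' this monotone convergence from the probability-measure property of $\mu_\omega$ and invariance, but there is no such easy deduction: the invariant density produced from the Banach limit need not a priori have the property that almost every trajectory eventually sees its support, and establishing it requires the structural analysis of conservative/dissipative decomposition for Markov operators. Likewise, $(1)\Rightarrow(3)$ is not a matter of bounding $\mathscr{P}^n1_{\Omega\times X}$ by a ``uniformly integrable envelope built from iterates of $h$''; the function $1$ is not dominated by $h$, and the weak precompactness of $\{\mathscr{P}^n1_{\Omega\times X}\}_n$ must be extracted from the monotone support condition by a nontrivial argument (one must show the mass escaping the support of $h$ tends to zero uniformly, and handle the part on $\supp h$ separately). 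The paper finesses both issues by simply invoking Lemma~\ref{X}, which is Theorem~3.1 of \cite{T} applied to the single Markov operator $\mathscr{P}$ on $L^1(\Omega\times X,\mP\times m)$; that theorem already contains the equivalence $(1)\Leftrightarrow(3)\Leftrightarrow(4)\Leftrightarrow(5)$ for a single Markov operator and is the crux that your sketch would need to reprove. Similarly, your $(3)\Rightarrow(2)$ sketch (global weak limit ``dominates'' the Banach-limit set function) is vague on the essential point, namely countable additivity of $\mu_\omega$ fiberwise, which really rests on $(6)$, not $(3)$ directly; this is easily repaired since you reach $(6)$ from $(3)$ around the cycle, but the argument should be rerouted accordingly. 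In short: the fiberwise block and the $(6)\Rightarrow(3)$ bridge are fine and even offer a cleaner proof, but the $(1)\Leftrightarrow(2)\Leftrightarrow(3)$ portion needs to be anchored to Theorem~3.1 of \cite{T} (or an equivalent redevelopment) rather than the heuristic steps you sketched.
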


\begin{remark}\label{remthmA}
(I)
The important point of Theorem \ref{thmA} is to show that ``fiberwise'' weak precompactness of $\{ P_{\sigma^{-n}\omega}^{(n)}1_X\}_n$ (the condition (6)) implies the existence of a random invariant density for $\mathscr{P}$ (the condition (1)).

(II)
Several sufficient conditions for weak precompactness are known as follows (IV.8, \cite{DuSch}).
It reads that $\{ P_{\sigma^{-n}\omega}^{(n)}1_X\}_n$ is weakly precompact if one of the following three conditions holds:
\begin{enumerate}
\item[(i)]
There exists $g=g_{\omega}\in L^1_+(X,m)\coloneqq \left\{f\in L^1(X,m):f\ge0\right\}$ such that for any $n\ge1$
\begin{align*}
\lv  P_{\sigma^{-n}\omega}^{(n)}1_X(x)\rv\le g(x)\quad m\text{-almost every }x\in X;
\end{align*}
\item[(ii)]
There exists $M=M_{\omega}>0$ and $p=p_{\omega}>1$ such that 
\begin{align*}
\lV  P_{\sigma^{-n}\omega}^{(n)}1_X\rV_{L^p(m)}\le M;
\end{align*}
\item[(iii)]
$\{ P_{\sigma^{-n}\omega}^{(n)}1_X\}_n$ is uniformly integrable, namely, for any $\vep>0$ there exists $\delta>0$ such that
\begin{align*}
m(A)<\delta\quad\text{implies}\quad\int_A P_{\sigma^{-n}\omega}^{(n)}1_Xdm<\vep\quad\text{for all }n\ge1.
\end{align*}
\end{enumerate}

(III)
For the use of Banach limits to construct invariant measures for single Markov operators, we refer to \cite{DS,S,T}.
Our result is the generalization of them to Markov operator cocycles.
\end{remark}

We also give a sufficient condition for weak precompactness of $\{P_{\sigma^{-n}\omega}^{(n)}g\}_n$ for $g\in L^1_+(X,m)$,
which is simple generalization of Proposition 3 in \cite{DS} to the Markov operator cocycles case.

\begin{proposition}\label{prop}
Suppose that for $\mP$-almost every $\omega\in\Omega$ there is a random invariant density $h_{\omega}\in D(X,m)$ such that $F_{\alpha,n}\coloneqq\{h_{\sigma^{-n}\omega}>\alpha\}$ satisfies $m(F_{\alpha,n})\nearrow 1$ as $\alpha\searrow 0$ uniformly in $n\in\N$.
Then, for $\mP$-almost every $\omega\in\Omega$, the set
$\{P_{\sigma^{-n}\omega}^{(n)}g\}_n$ is uniformly integrable (and hence weakly precompact) for any $g\in L^1_+(X,m)$.
\end{proposition}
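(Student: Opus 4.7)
The plan is to verify the uniform integrability of $\{P_{\sigma^{-n}\omega}^{(n)}g\}_n$ directly from the definition; weak precompactness then follows from the Dunford--Pettis theorem, since the family is $L^1$-bounded by $\lV g\rV_{L^1(X)}$ as each $P_{\sigma^{-i}\omega}$ is Markov. I would fix $\omega$ in a full-measure subset of $\Omega$ on which the hypothesis of the proposition holds, fix $\vep>0$, and aim to produce $\delta>0$ such that $m(A)<\delta$ forces $\int_A P_{\sigma^{-n}\omega}^{(n)}g\, dm<\vep$ for every $n\in\N$.

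The first move is a truncation: choose $M>0$ with $\lV g-g\wedge M\rV_{L^1(X)}<\vep/3$. Since $g-g\wedge M\ge 0$ and each $P_{\sigma^{-n}\omega}^{(n)}$ is positive and preserves integrals, $\int_A P_{\sigma^{-n}\omega}^{(n)}(g-g\wedge M)\, dm<\vep/3$ holds for every measurable $A$ and every $n$, so it suffices to control $\int_A P_{\sigma^{-n}\omega}^{(n)}(g\wedge M)\, dm$ uniformly in $n$. Using the hypothesis, I would next pick $\alpha>0$ so small that $m(F_{\alpha,n}^c)<\vep/(3M)$ holds for every $n\in\N$ simultaneously; this is exactly the uniformity provided by $m(F_{\alpha,n})\nearrow 1$ as $\alpha\searrow 0$ uniformly in $n$. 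Splitting $g\wedge M=(g\wedge M)1_{F_{\alpha,n}}+(g\wedge M)1_{F_{\alpha,n}^c}$, on $F_{\alpha,n}$ we have the pointwise domination $(g\wedge M)1_{F_{\alpha,n}}\le (M/\alpha)h_{\sigma^{-n}\omega}$, while the second piece has $L^1$-norm at most $M\cdot m(F_{\alpha,n}^c)<\vep/3$.

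Applying positivity of $P_{\sigma^{-n}\omega}^{(n)}$, integral preservation, and the invariance identity $P_{\sigma^{-n}\omega}^{(n)}h_{\sigma^{-n}\omega}=h_\omega$, this yields
\[
\int_A P_{\sigma^{-n}\omega}^{(n)}g\, dm\ \le\ \frac{2\vep}{3}+\frac{M}{\alpha}\int_A h_\omega\, dm
\]
for every measurable $A\subset X$ and every $n\in\N$. Since $h_\omega\in L^1(X,m)$, the measure $A\mapsto\int_A h_\omega\, dm$ is absolutely continuous with respect to $m$, producing a $\delta>0$ with $\int_A h_\omega\, dm<\vep\alpha/(3M)$ whenever $m(A)<\delta$, which completes the uniform integrability estimate. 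The only nontrivial use of the hypothesis is in selecting $\alpha$ uniformly in $n$; the rest is a routine comparison with the invariant density $h_{\sigma^{-n}\omega}$, so I do not anticipate any serious obstacle.
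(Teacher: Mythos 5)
Your proof is correct and follows essentially the same route as the paper's: both approximate $g$ by a bounded nonnegative function with $L^1$-error $\vep/3$ (you truncate, the paper takes an arbitrary $g'\in L^\infty_+$), both split over $F_{\alpha,n}$ and its complement, both exploit $1_{F_{\alpha,n}}\le h_{\sigma^{-n}\omega}/\alpha$ together with the invariance $P_{\sigma^{-n}\omega}^{(n)}h_{\sigma^{-n}\omega}=h_\omega$, and both finish with absolute continuity of $A\mapsto\int_A h_\omega\,dm$. The only cosmetic difference is that the paper passes to the adjoint and estimates $\int_X g'\bigl(P_{\sigma^{-n}\omega}^{(n)}\bigr)^*1_A\,dm$ using $\bigl(P_{\sigma^{-n}\omega}^{(n)}\bigr)^*1_A\le 1$ on $X\setminus F_{\alpha,n}$, whereas you stay on the forward side and use mean preservation; the resulting bounds are identical.
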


\subsection{Main theorem B: Mean ergodic theorem for linear operator cocycles}

We now state the mean ergodic theorem for linear operator cocycle on general Banach space. 

Let $(\Omega,\mathcal{F},\mathbb{P})$ be a probability space and $\sigma$ be an invertible, $\mathbb{P}$-preserving and ergodic transformation on $\Omega$.
Let $\mathfrak{X}$ be a weakly sequentially complete Banach space and $P: \Omega\times \mathfrak{X}\to \mathfrak{X}$ a linear operator cocycle which is contraction almost everywhere.
Again $L^1(\Omega,\mathfrak{X})$ denotes the set of all $L^1$-functions from $\Omega$ to $\mathfrak{X}$ equipped with the norm $\opnorm{\cdot}_1$
given by
\begin{align*}
\opnorm{f}_1 \coloneqq\int_\Omega \left\lVert f_\omega\right\rVert_{\mathfrak{X}} d\mathbb{P}(\omega)\quad\text{for $f\in L^1(\Omega,\mathfrak{X})$}.
\end{align*}
We define the operator $\mathscr{A}^n$ meaning the average of $\mathscr{P}^n$ by
\begin{align*}
(\mathscr{A}^nf)(\omega)\coloneqq\frac{1}{n}\sum_{k=0}^{n-1} (\mathscr{P}^k f)(\omega)=\frac{1}{n}\sum_{k=0}^{n-1} P_{\sigma^{-k}\omega}^{(k)}f_{\sigma^{-k}\omega}
\end{align*}
for $f\in L^1(\Omega,\mathfrak{X})$ and $\mP$-almost every $\omega\in\Omega$.
Recall that if a sequence $\{\mathscr{A}^nf\}_n$ is fiberwise weakly precompact for $f\in L^1(\Omega,\mathfrak{X})$, then there exists $h\in L^1(\Omega,\mathfrak{X})$ such that for $\mathbb{P}$-almost every $\omega\in\Omega$, there exists a subsequence $\{n_k\}_k\subset \mathbb{N}$, $n_k=n_k(\omega,f)$, such that $(\mathscr{A}^{n_k}f)(\omega)$ converges weakly to $h(\omega)$ for $\mP$-almost every $\omega\in\Omega$.

The second main theorem in this paper is as follows.

\begin{theorem}\label{MET}
Let $\mathfrak{X}$ be a (real or complex) weakly sequentially complete Banach space, $\sigma$ an invertible $\mathbb{P}$-preserving ergodic transformation over the probability space $(\Omega,\mathcal{F},\mathbb{P})$, and $P_\omega$ a linear operator which maps $\mathfrak{X}$ into itself. Assume that $\left\lVert P_\omega\right\rVert\leq 1$ for $\mathbb{P}$-almost every $\omega\in\Omega$ and 
$\{\mathscr{A}^nf\}_n$ is fiberwise weakly precompact for any $f\in L^1(\Omega,\mathfrak{X})$.
Then there exists $h\in L^1(\Omega,\mathfrak{X})$ such that
\begin{align*}
\lim_{n\to\infty}\left\lVert (\mathscr{A}^nf)(\omega)-h(\omega)\right\rVert_{\mathfrak{X}}=0,
\end{align*}
and $P_\omega h_\omega=h_{\sigma\omega}$ for $\mathbb{P}$-almost every $\omega\in\Omega$.
\end{theorem}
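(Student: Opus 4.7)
My plan is to recast Theorem~\ref{MET} as a mean ergodic theorem of Yosida--Kakutani type for the lift operator $\mathscr{P}$ on the weakly sequentially complete Banach space $L^1(\Omega,\mathfrak{X})$. Since $\lV P_\omega\rV\le 1$ almost surely, $\mathscr{P}$ is a contraction on $L^1(\Omega,\mathfrak{X})$, and $\mathscr{A}^n=\frac{1}{n}\sum_{k=0}^{n-1}\mathscr{P}^k$ is exactly its Ces\`{a}ro average. Set $N=\ker(I-\mathscr{P})$ and $M=(I-\mathscr{P})L^1(\Omega,\mathfrak{X})$. The overall plan is to reduce the theorem to the Banach-space decomposition $L^1(\Omega,\mathfrak{X})=N+\overline{M}$: for $g=(I-\mathscr{P})u\in M$ telescoping gives $\mathscr{A}^n g(\omega)=\frac{1}{n}(u_\omega-P^{(n)}_{\sigma^{-n}\omega}u_{\sigma^{-n}\omega})$, whose $\mathfrak{X}$-norm tends to $0$ almost surely by the Borel--Cantelli observation that $\lV u_{\sigma^{-n}\omega}\rV_{\mathfrak{X}}/n\to 0$ for $\omega\mapsto\lV u_\omega\rV_{\mathfrak{X}}\in L^1(\Omega)$; approximating $g\in\overline{M}$ by elements of $M$ together with the Birkhoff estimate $\frac{1}{n}\sum_{j=0}^{n-1}\lV(g-g_k)_{\sigma^{-j}\omega}\rV_{\mathfrak{X}}\to\opnorm{g-g_k}_1$ upgrades this to $\mathscr{A}^n g(\omega)\to 0$ in $\mathfrak{X}$ almost surely for all $g\in\overline{M}$. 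Combined with $\mathscr{A}^n=\mathrm{id}$ on $N$, the decomposition would give the desired fiberwise norm convergence $\mathscr{A}^n f(\omega)\to h(\omega)$, and the identity $\mathscr{P}h=h$ is exactly $P_\omega h_\omega=h_{\sigma\omega}$.

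For fixed $f\in L^1(\Omega,\mathfrak{X})$, the fiberwise weak precompactness hypothesis yields $h\in L^1(\Omega,\mathfrak{X})$ such that $(\mathscr{A}^{n_k(\omega)}f)(\omega)\rightharpoonup h(\omega)$ weakly in $\mathfrak{X}$ for $\mathbb{P}$-almost every $\omega$ along an $\omega$-dependent subsequence. I first check $h\in N$ from
\begin{align*}
(\mathscr{P}\mathscr{A}^n f-\mathscr{A}^n f)(\omega)=\frac{1}{n}\big(P_{\sigma^{-n}\omega}^{(n)}f_{\sigma^{-n}\omega}-f_\omega\big),
\end{align*}
whose $\mathfrak{X}$-norm is bounded by $\frac{1}{n}(\lV f_{\sigma^{-n}\omega}\rV_{\mathfrak{X}}+\lV f_\omega\rV_{\mathfrak{X}})$ and hence tends to $0$ almost surely by the same Borel--Cantelli argument. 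Passing the fiberwise weak limit through the bounded operator $P_{\sigma^{-1}\omega}$ then gives $P_{\sigma^{-1}\omega}h_{\sigma^{-1}\omega}=h_\omega$, so $h\in N$.

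It remains to show $f-h\in\overline{M}$. For this, note the algebraic identity $f-\mathscr{A}^n f=(I-\mathscr{P})u_n\in M$ with $u_n=\frac{1}{n}\sum_{k=1}^{n-1}\sum_{j=0}^{k-1}\mathscr{P}^j f\in L^1(\Omega,\mathfrak{X})$. By Mazur's theorem, the inclusion $f-h\in\overline{M}^w=\overline{M}$ will follow once the fiberwise weak convergence is upgraded to global weak convergence $\mathscr{A}^n f\rightharpoonup h$ in $L^1(\Omega,\mathfrak{X})$.

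The main analytic obstacle is precisely this fiberwise-to-global upgrade. I plan to tackle it via the Hahn--Banach dual formulation, showing that any $\phi\in L^1(\Omega,\mathfrak{X})^*$ with $\mathscr{P}^*\phi=\phi$ and $\phi|_N=0$ must vanish. The invariance $\phi(\mathscr{A}^n f)=\phi(f)$ is automatic, and the task is to exploit the fiberwise hypothesis together with the uniformly integrable majorant $\lV\mathscr{A}^n f(\omega)\rV_{\mathfrak{X}}\le\frac{1}{n}\sum_{k=0}^{n-1}\lV f_{\sigma^{-k}\omega}\rV_{\mathfrak{X}}$, whose $L^1(\Omega)$-norm equals $\opnorm{f}_1$ by $\mathbb{P}$-invariance and whose almost-sure limit equals $\opnorm{f}_1$ by Birkhoff's ergodic theorem, in order to produce a genuine weak accumulation point of $\{\mathscr{A}^n f\}$ in $L^1(\Omega,\mathfrak{X})$ and force $\phi(f)=\phi(h)=0$. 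This will be accomplished by a Banach-limit / Vitali-type convergence argument against the dual pairing, and it is precisely at this step that the weak sequential completeness of $\mathfrak{X}$ (and hence of $L^1(\Omega,\mathfrak{X})$) is decisive.
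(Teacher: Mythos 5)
Your overall framework is the right one -- Yosida--Kakutani with $N=\ker(I-\mathscr{P})$ and $M=(I-\mathscr{P})L^1(\Omega,\mathfrak{X})$, the telescoping estimate plus Birkhoff for vanishing of $\mathscr{A}^n$ on $\overline{M}$, and $\mathscr{P}h=h$ identified with $P_\omega h_\omega = h_{\sigma\omega}$. The Birkhoff/Borel--Cantelli control of $\lVert u_{\sigma^{-n}\omega}\rVert_{\mathfrak{X}}/n\to 0$ and $\tfrac1n\sum_{k<n}\lVert r_{\sigma^{-k}\omega}\rVert_{\mathfrak{X}}\to\opnorm{r}_1$ is exactly the engine the paper uses in the body of its proof. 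However, the proposal has a genuine gap at the step you yourself flag as the ``main analytic obstacle,'' and your plan for closing it does not work as stated.

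The decisive ingredient in the paper's proof is Lemma~\ref{lemHB}, which asserts directly that $f-h\in\overline{M}$: for every $\varepsilon>0$ there are $g,r$ with $f-h=\mathscr{P}g-g+r$ and $\opnorm{r}_1<\varepsilon$. The paper proves this by contradiction with a Hahn--Banach separation that is performed \emph{fiberwise}, at a fixed $\omega$, producing a functional $q\in\mathfrak{X}^*$ (not $L^1(\Omega,\mathfrak{X})^*$) that annihilates $\overline{\{(\mathscr{P}g-g)(\omega):g\}}\subset\mathfrak{X}$ while $\langle f(\omega)-h(\omega),q\rangle\neq 0$; pairing $q$ against $(\mathscr{A}^{n_k(\omega)}f)(\omega)$ and using the fiberwise hypothesis directly yields the contradiction, with no need for any global weak limit. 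Your version replaces this with a functional $\phi\in L^1(\Omega,\mathfrak{X})^*$ and Mazur's theorem, but Mazur requires the \emph{global} weak convergence $\mathscr{A}^nf\rightharpoonup h$ in $L^1(\Omega,\mathfrak{X})$, which is precisely what you are trying to establish; the route through ``$\phi(\mathscr{A}^nf)=\phi(f)$ and a Vitali/Banach-limit argument'' is, as written, circular. Relatedly, your claim that ``passing the fiberwise weak limit through $P_{\sigma^{-1}\omega}$ gives $h\in N$'' has a subsequence-matching problem: the $\omega$-dependent subsequences $\{n_k(\omega)\}$ and $\{n_k(\sigma^{-1}\omega)\}$ need not agree, so you cannot identify the two weak limit points without first knowing the whole sequence has a unique limit -- again circular. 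The paper sidesteps this by establishing the fiberwise strong convergence of $\mathscr{A}^nf-\mathscr{A}^nh$ first (via Lemma~\ref{lemHB}), and only then proving $\mathscr{P}h=h$ using Ghenciu's Corollary~2.4(i) (boundedness and uniform integrability of $\{\mathscr{A}^nf\}$ plus fiberwise weak convergence give weak Cauchy in $L^1(\Omega,\mathfrak{X})$, and weak sequential completeness finishes). You do not cite any such tool, and your argument would need one at exactly this point. To repair the proposal you would need to carry out the separation fiberwise in $\mathfrak{X}^*$, as in the paper's Lemma~\ref{lemHB}, rather than globally in $L^1(\Omega,\mathfrak{X})^*$.
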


\begin{remark}\label{rem}
(I)
As a consequence of Theorem \ref{MET}, we immediately find that the fiberwise weak precompactness of $\{\mathscr{A}^nf\}_n$ implies globally weak precompactness for any $f\in L^1(\Omega,\mathfrak{X})$. Furthermore,
the converse of Theorem \ref{MET} (i.e., the strong convergence of $\mathscr{A}^nf$ implies the fiberwise weak preconpactness of $\{\mathscr{A}^nf\}_n$) is also obvious.

(II)
When $\mathfrak{X}=L^1(X,m)$, the existence of a random invariant density is derived from Theorem \ref{thmA}.
In that case, the relation $L^1(\Omega,L^1(X,m)) \cong L^1(\Omega\times X,\mP\times m)$ and the lift operator $\mathscr{P}$ work well as in Theorem \ref{thmA} and its proof.
However, for a general Banach space, we only have the implication of fiberwise weak precompactness to the existence and strong convergence of the limit as in Theorem \ref{MET}.
\end{remark}

\section{Examples}\label{eg_nMFS}

In this section, we introduce random non-Markov fibered systems with countable partitions satisfying bounded distortion property, and apply Theorem \ref{thmA} to the systems in Proposition \ref{exampleprop}.
For classical deterministic transformations with fibered structures satisfying bounded distortion, we refer to \cite{Sch}.
We emphasize that the fibered systems considered in this section are defined over countable (not necessarily finite) sets in standard probability spaces so that the systems include random expanding maps and random Lasota--Yorke maps (see also \cite{Dr,D2}).
For these random maps, Proposition \ref{exampleprop} guarantees the existence of random invariant density.
Throughout this section, we let $(X,\mathcal{A},m)$ and $(\Omega,\mathcal{F},\mP)$ be standard probability spaces as a phase space and a parameter space, respectively.

\begin{example}
Let $I$ be a non-empty countable set and $T_{\omega}:X\to X$ a non-singular transformation for each $\omega\in\Omega$.
We consider \emph{random fibered systems} with the following properties.
\begin{enumerate}
\item
There exists an $\omega$-independent partition of $X$ (up to $m$-null sets), $\{X_i\}_{i\in I}$ such that $T_{\omega}\rvert_{X_i}:X_i\to T_{\omega}X_i$ is invertible for each $i\in I$.
We write $X_{i_1i_2\dots i_n}^{\omega}=X_{i_1}\cap T_{\sigma^{-n}\omega}^{-1}X_{i_2}\cap\dots\cap T_{\sigma^{-n}\omega}^{-1}\circ\cdots\circ T_{\sigma^{-1}\omega}^{-1}X_{i_n}$ and $V_{i_1\dots i_n}^{\omega}:T_{\sigma^{-n}\omega}^{(n)}X_{i_1\dots i_n}^{\omega}\to X_{i_1\dots i_n}^{\omega}$ for the local inverses of $T_{\sigma^{-n}\omega}^{(n)}$;
\item
The partition satisfies generator condition, that is, for $\mP$-almost every $\omega\in\Omega$, the $\sigma$-algebra generated by cylinder sets $\{X_{i_1i_2\dots i_n}^{\omega}:n\ge1,\ i_1,\dots,i_n\in I\}$ is $\mathcal{B}\pmod{m}$;
\item
Transformations satisfy bounded distortion property, namely, there exists $C\ge1$ such that for $\mP$-almost every $\omega\in\Omega$ and any non-empty cylinder $X_{i_1\dots i_n}^{\omega}$, we have
\begin{align*}
\underset{x\in T_{\sigma^{-n}\omega}^{(n)}X_{i_1\dots i_n}^{\omega}}{\operatorname*{ess\ }\sup}\frac{d\left(m\circ V_{i_1\dots i_n}^{\omega}\right)}{dm}(x)
\le
C \underset{x\in T_{\sigma^{-n}\omega}^{(n)}X_{i_1\dots i_n}^{\omega}}{\operatorname*{ess\ }\inf}\frac{d\left(m\circ V_{i_1\dots i_n}^{\omega}\right)}{dm}(x)
\end{align*}
\item
There exists a constant $c>0$ such that for any $n\ge1$ and any cylinder $X_{i_1\dots i_n}^{\omega}$, we have
\begin{align*}
m\left(T_{\sigma^{-n}\omega}^{(n)}X_{i_1\dots i_n}^{\omega}\right)>c.
\end{align*}
Note that if $\# I<\infty$ and $T_{\omega}$ has Markov structure for $\mP$-almost every $\omega\in\Omega$ then this property is always fulfilled.
\end{enumerate}
Then the following estimate is easy to deduce.

\begin{lemma}\label{lem:ex1}
For $\mP$-almost every $\omega\in\Omega$ and any non-empty cylinder $X_{i_1\dots i_n}^{\omega}$, we have for $m$-almost every $x\in X$
\begin{align*}
\frac{d\left(m\circ V_{i_1\dots i_n}^{\omega}\right)}{dm}(x)
\le
\frac{Cm\left(X_{i_1\dots i_n}^{\omega}\right)}{m\left(T_{\sigma^{-n}\omega}^{(n)}X_{i_1\dots i_n}^{\omega}\right)}
\end{align*}
\end{lemma}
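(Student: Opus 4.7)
The plan is to chain together the bounded distortion property (item (3) in the list of assumptions) with the trivial observation that the essential infimum of a nonnegative function is bounded above by its average. The target inequality has the form $\mathrm{ess\,sup}\,\rho \le C \cdot (\text{average of }\rho)$, which is exactly what these two ingredients give, once one identifies the average of $\rho = d(m\circ V_{i_1\dots i_n}^{\omega})/dm$ over the set $T_{\sigma^{-n}\omega}^{(n)}X_{i_1\dots i_n}^{\omega}$.

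First I would compute that average. Writing $B = T_{\sigma^{-n}\omega}^{(n)}X_{i_1\dots i_n}^{\omega}$ and $\rho = d(m\circ V_{i_1\dots i_n}^{\omega})/dm$, the defining property of the Radon--Nikod\'ym derivative together with $V_{i_1\dots i_n}^{\omega}(B) = X_{i_1\dots i_n}^{\omega}$ yields
\begin{align*}
\int_B \rho\,dm \;=\; (m\circ V_{i_1\dots i_n}^{\omega})(B) \;=\; m\bigl(X_{i_1\dots i_n}^{\omega}\bigr).
\end{align*}
Hence
\begin{align*}
\underset{x\in B}{\operatorname*{ess\ }\inf}\,\rho(x)\;\le\;\frac{1}{m(B)}\int_B \rho\,dm \;=\; \frac{m(X_{i_1\dots i_n}^{\omega})}{m(B)},
\end{align*}
where the first inequality is valid because $m(B)>c>0$ by item (4), so dividing by $m(B)$ is legitimate.

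Next I would apply the bounded distortion assumption (item (3)), which states that the essential supremum of $\rho$ on $B$ is at most $C$ times its essential infimum on $B$. Combining with the previous display,
\begin{align*}
\underset{x\in B}{\operatorname*{ess\ }\sup}\,\rho(x)\;\le\; C\cdot\underset{x\in B}{\operatorname*{ess\ }\inf}\,\rho(x)\;\le\; \frac{C\,m(X_{i_1\dots i_n}^{\omega})}{m(B)}.
\end{align*}
Since $\rho$ is supported on $B$ (the measure $m\circ V_{i_1\dots i_n}^{\omega}$ is carried by the image of $V_{i_1\dots i_n}^{\omega}$, i.e.\ on $B$ after identifying things via $V$; outside $B$ the derivative is zero), the pointwise $m$-a.e.\ bound follows from the essential supremum bound restricted to $B$ and from $\rho = 0$ off $B$.

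There is no substantive obstacle; the only thing to be mildly careful about is that the essential infimum is only a meaningful upper bound for the average when $m(B)>0$, which is precisely what item (4) guarantees uniformly (with constant $c$). The full argument is a one-line application of bounded distortion after the average has been computed.
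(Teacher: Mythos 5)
Your proof is correct and follows essentially the same route as the paper's: both compute $\int_B \rho\, dm = m(X_{i_1\dots i_n}^{\omega})$ for $\rho = d(m\circ V_{i_1\dots i_n}^{\omega})/dm$ and $B = T_{\sigma^{-n}\omega}^{(n)}X_{i_1\dots i_n}^{\omega}$, then invoke the bounded distortion constant $C$. You spell it out as $\operatorname*{ess\,sup}\rho \le C\operatorname*{ess\,inf}\rho \le C\cdot\text{(average)}$, while the paper writes the same content in one compact display $\rho\cdot m(B)\le C\int_B\rho\,dm$; the remarks about $m(B)>c$ from item (4) and $\rho$ vanishing off $B$ are reasonable tidying-up of the same idea.
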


\begin{proof}
By bounded distortion (3), we have
\begin{align*}
\frac{d\left(m\circ V_{i_1\dots i_n}^{\omega}\right)}{dm}m\left(T_{\sigma^{-n}\omega}^{(n)}X_{i_1\dots i_n}^{\omega}\right)
&\le C\int_{T_{\sigma^{-n}\omega}^{(n)}X_{i_1\dots i_n}^{\omega}}\frac{d\left(m\circ V_{i_1\dots i_n}^{\omega}\right)}{dm}dm\\
&= Cm\left(X_{i_1\dots i_n}^{\omega}\right)
\end{align*}
and proof is completed.
\end{proof}

We can apply Theorem \ref{thmA} to (quenched type) random fibered systems satisfying (1)--(4) as follows.

\begin{proposition}\label{exampleprop}
Let $(T,\sigma)$ be a random fibered system with the above conditions (1)--(4).
Then, the transfer operator cocycle $\{P_{\sigma^{-n}\omega}^{(n)}1_X\}$ defined as in (\ref{eq:0913b}) is weakly precompact for $\mP$-almost every $\omega\in\Omega$.
That is, there exists a random invariant density in the sense of Theorem \ref{thmA}.
\end{proposition}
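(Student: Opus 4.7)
The plan is to verify condition (6) of Theorem \ref{thmA} by showing the stronger uniform integrability criterion in Remark \ref{remthmA}(II)(iii), and then appeal to the implication $(6)\Rightarrow(1)$. In fact we will show the bound
\begin{align*}
\int_A P^{(n)}_{\sigma^{-n}\omega} 1_X\, dm \leq \frac{C}{c}\, m(A)
\end{align*}
uniformly in $n\in\N$, $\omega\in\Omega$ and $A\in\mathcal{A}$, which immediately yields uniform integrability of $\{P^{(n)}_{\sigma^{-n}\omega}1_X\}_n$ (take $\delta=c\vep/C$).

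The first step is to dualize. By the defining property \eqref{eq:0913b} of the transfer operator, for any measurable $A\subset X$,
\begin{align*}
\int_A P^{(n)}_{\sigma^{-n}\omega} 1_X\, dm
= \int_X 1_A\circ T^{(n)}_{\sigma^{-n}\omega}\, dm
= m\!\left(\bigl(T^{(n)}_{\sigma^{-n}\omega}\bigr)^{-1}A\right).
\end{align*}
Next, decompose along the cylinder partition: since $\{X^\omega_{i_1\dots i_n}\}$ partitions $X$ mod $m$ and $T^{(n)}_{\sigma^{-n}\omega}$ restricted to each cylinder is invertible with inverse $V^\omega_{i_1\dots i_n}$ (condition (1)), we have
\begin{align*}
m\!\left(\bigl(T^{(n)}_{\sigma^{-n}\omega}\bigr)^{-1}A\right)
= \sum_{i_1,\dots,i_n} m\!\left(V^\omega_{i_1\dots i_n}\bigl(A\cap T^{(n)}_{\sigma^{-n}\omega}X^\omega_{i_1\dots i_n}\bigr)\right).
\end{align*}

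The third step is to apply the Jacobian estimate. Writing each summand as an integral of $d(m\circ V^\omega_{i_1\dots i_n})/dm$ over $A\cap T^{(n)}_{\sigma^{-n}\omega}X^\omega_{i_1\dots i_n}$, using Lemma \ref{lem:ex1}, and then the uniform lower bound $m(T^{(n)}_{\sigma^{-n}\omega}X^\omega_{i_1\dots i_n})>c$ from condition (4), we obtain
\begin{align*}
m\!\left(V^\omega_{i_1\dots i_n}\bigl(A\cap T^{(n)}_{\sigma^{-n}\omega}X^\omega_{i_1\dots i_n}\bigr)\right)
\leq \frac{C\, m(X^\omega_{i_1\dots i_n})}{m(T^{(n)}_{\sigma^{-n}\omega}X^\omega_{i_1\dots i_n})}\, m(A)
\leq \frac{C}{c}\, m(X^\omega_{i_1\dots i_n})\, m(A).
\end{align*}
Summing over the cylinders and using $\sum_{i_1,\dots,i_n} m(X^\omega_{i_1\dots i_n})=1$ gives the claimed bound.

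Finally, uniform integrability of $\{P^{(n)}_{\sigma^{-n}\omega}1_X\}_n$ holds for every $\omega\in\Omega$ (outside the exceptional null set where (3) or (4) may fail), so by Remark \ref{remthmA}(II)(iii) this sequence is weakly precompact in $L^1(X,m)$ for $\mP$-almost every $\omega$. This is precisely condition (6) of Theorem \ref{thmA}, so condition (1) holds, yielding a random invariant density. The argument is essentially routine once the Jacobian bound in Lemma \ref{lem:ex1} is available; the only point to be slightly careful about is the countability of $I$, which we handle simply by the monotone convergence theorem in the cylinder sum (no finiteness of the index set is used).
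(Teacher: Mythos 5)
Your proof is correct, and the overall route coincides with the paper's: dualize via \eqref{eq:0913b} so that $\int_A P^{(n)}_{\sigma^{-n}\omega}1_X\,dm = m\bigl((T^{(n)}_{\sigma^{-n}\omega})^{-1}A\bigr)$, decompose $(T^{(n)}_{\sigma^{-n}\omega})^{-1}A$ along the countable partition into $n$-cylinders, apply the Jacobian bound of Lemma \ref{lem:ex1} together with the lower bound in condition (4), and sum (using that the cylinders partition $X$ up to $m$-null sets and monotone convergence for the countable sum). The one substantive difference is that you run the estimate for an \emph{arbitrary} measurable $A$ and obtain the clean uniform bound $\int_A P^{(n)}_{\sigma^{-n}\omega}1_X\,dm \le (C/c)\,m(A)$ with the explicit $\delta=c\vep/C$; the paper instead first specializes $A$ to a cylinder $X^\omega_{k_1\dots k_q}$, derives the corresponding bound there, and then appeals to the generator condition (2) to pass from cylinders to general sets. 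Your version is preferable on this point: the bound holds outright for all measurable $A$, so condition (2) plays no role in the uniform integrability step, and one avoids the (not entirely immediate) passage from uniform integrability tested on cylinders to uniform integrability tested on all measurable sets. The final invocation of Remark \ref{remthmA}(II)(iii) and the implication $(6)\Rightarrow(1)$ of Theorem \ref{thmA} matches the paper exactly.
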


\begin{remark}
As in Proposition \ref{exampleprop}, for the existence of a random invariant density, we only need the hypothesis (1)--(4) that is a different approach from that of \cite{Dr} (i.e., a spectral approach); for instance, we need not assume countability of $\{T_{\omega}:\omega\in\Omega\}$ nor quasi-compactness of a transfer operator cocycle over some appropriate Banach space, while such strong assumptions are required in establishing further limit theorems as in \cite{Dr}.
It seems not direct to analyze the regularity of a random invariant density as in Theorem 28.1.5 in \cite{Sch} or further statistical properties from the assumption (1)--(4).
This would be our next step.
\end{remark}

\noindent
{\bf Proof of Proposition \ref{exampleprop}}.
By the generator condition (2), we only show uniform integrability of $P_{\sigma^{-n}\omega}^{(n)}1_X$ over cylinder sets (see (II)-(iii) in Remark \ref{remthmA}).
For $\vep>0$, set $\delta=\vep\inf_{n,\omega}m(T_{\sigma^{-n}\omega}^{(n)}X)/C$ where $C>0$ is given in the condition (3).
Fix $\omega\in\Omega$, $A=X_{k_1\dots k_q}^{\omega}$ a cylinder set of length $q$ with $m(A)<\delta$ and $p\ge1$.
Write
\begin{align*}
m\left(T_{\sigma^{-p}\omega}^{(-p)}A\right)
&=\sum_{l_1,\dots,l_p}\int_{V_{l_1\dots l_p}^{\omega}\left(X_{k_1\dots k_q}^{\omega}\cap T_{\sigma^{-p}\omega}^{(p)}X_{l_1\dots l_p}^{\omega}\right)}1_Xdm\\
&=\sum_{l_1,\dots,l_p}\int_{V_{k_1\dots k_q}^{\omega}T_{\sigma^{-q}\omega}^{(q)}\left(X_{k_1\dots k_q}^{\omega}\cap T_{\sigma^{-p}\omega}^{(p)}X_{l_1\dots l_p}^{\omega}\right)}\frac{d\left(m\circ V_{l_1\dots l_p}^{\omega}\right)}{dm}dm\\
&=\sum_{l_1,\dots,l_p}\int_{T_{\sigma^{-(p+q)}\omega}^{(p+q)}\left(X_{k_1\dots k_ql_1\dots l_p}^{\omega}\right)}\frac{d\left(m\circ V_{l_1\dots l_p}^{\omega}\right)}{dm}\circ V_{k_1\dots k_q}^{\omega}\cdot\frac{d\left(m\circ V_{k_1\dots k_q}^{\omega}\right)}{dm}dm\\
&\le \frac{Cm\left(X_{k_1\dots k_q}^{\omega}\right)}{m\left(T_{\sigma^{-q}\omega}^{(q)}X_{k_1\dots k_q}\right)}
\end{align*}
where we used Lemma \ref{lem:ex1} for the last inequality.
The choice of $\delta$ and $A$ implies
\begin{align*}
m\left(T_{\sigma^{-p}\omega}^{(-p)}A\right)<\vep
\end{align*}
and we prove the proposition.

\begin{remark}
We give a small remark on the potential of further applications.
For random Liverani--Saussol--Vaienti maps, dealt with in \cite{BB} for instance, which has a common indifferent fixed point, the condition (3) or (4) above are not satisfied.
Thus, Theorem \ref{thmA} may not be applicable to the system, depending on the parameter of tangency at the indifferent fixed point.
For the cases of failure, induced transformations or jump transformations (see \cite{T}) in quenched sense might work.
Furthermore, by constructing induced operators and jump operators for Markov operator cocycles, and applying Theorem \ref{thmA} to the induced/jump operators, we expect to deduce that more general Markov operator cocycles admit random invariant (locally integrable) densities.
We will investigate it in the other paper.
\end{remark}

\end{example}

\section{Preliminaries}

In this section, we prepare some basic backgrounds to prove our main theorems.
We first show the following isometric isomorphism between $L^1(\Omega,L^1$ $(X,m))$ and $L^1(\Omega\times X,\mP\times m)$, that identifies a random invariant density $h\in L^1(\Omega,L^1(X,m))$ as a function in $L^1(\Omega\times X,\mP\times m)$.

\begin{lemma}\label{cong}
$L^1(\Omega,L^1(X,m))\cong L^1(\Omega\times X,\mP\times m)$ holds.
\end{lemma}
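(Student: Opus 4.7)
The plan is to exhibit an explicit isometric linear bijection $\Psi : L^1(\Omega,L^1(X,m)) \to L^1(\Omega\times X, \mP\times m)$ by first defining it on simple tensors, extending by density, and then verifying surjectivity via a Fubini-type disintegration combined with Pettis's measurability theorem. The separability of $L^1(X,m)$, which is guaranteed by $X$ being Polish, is the key ingredient that makes the strong/weak measurability distinction disappear.

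First, on a simple function $f = \sum_{i=1}^N 1_{F_i} v_i$ with $F_i \in \mathcal{F}$ pairwise disjoint and $v_i \in L^1(X,m)$, I would set
\begin{align*}
(\Psi f)(\omega,x) \coloneqq \sum_{i=1}^{N} 1_{F_i}(\omega)\, v_i(x),
\end{align*}
which is plainly $\mathcal{F}\otimes\mathcal{A}$-measurable. Tonelli's theorem gives
\begin{align*}
\opnorm{f}_1 = \sum_{i=1}^{N} \mP(F_i)\,\lV v_i\rV_{L^1(X)} = \int_{\Omega\times X} |\Psi f| \, d(\mP\times m),
\end{align*}
so $\Psi$ is a linear isometry on simple functions. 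Since, by the very definition of $L^1(\Omega,L^1(X,m))$ via Bochner integration, simple functions are $\opnorm{\cdot}_1$-dense, $\Psi$ extends uniquely to an isometric linear map on the whole space, and its image is automatically closed in $L^1(\Omega\times X, \mP\times m)$.

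Second, for surjectivity I would start from $g \in L^1(\Omega\times X, \mP\times m)$ and define $f_g(\omega) \coloneqq g(\omega,\cdot)$. By Fubini, $g(\omega,\cdot) \in L^1(X,m)$ for $\mP$-almost every $\omega$ and $\omega \mapsto \lV g(\omega,\cdot)\rV_{L^1(X)}$ belongs to $L^1(\Omega,\mP)$. To conclude $f_g \in L^1(\Omega, L^1(X,m))$ I need strong measurability of $f_g$. For each $\varphi \in L^\infty(X,m) = L^1(X,m)^*$, Fubini gives that $\omega \mapsto \int_X g(\omega,x)\varphi(x)\,dm(x)$ is measurable, so $f_g$ is weakly measurable; since $L^1(X,m)$ is separable (the Polish assumption on $X$ ensures $\mathcal{A}$ is countably generated mod $m$), $f_g$ is essentially separably valued, and Pettis's measurability theorem upgrades weak measurability to strong measurability. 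Finally, $\Psi f_g = g$ by construction on a dense class and by continuity.

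The step I expect to be the most delicate is exactly this surjectivity argument, namely establishing the strong measurability of $\omega \mapsto g(\omega,\cdot)$: it is where the Polish hypothesis genuinely enters, through the separability of $L^1(X,m)$ that triggers Pettis's theorem. The rest of the proof — isometry on simple functions via Tonelli, extension by density, and identification of the image — is routine once the measurability obstacle has been removed.
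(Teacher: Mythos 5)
Your proof is correct and proceeds by essentially the same route as the paper — an isometry defined on simple tensors and extended by $\opnorm{\cdot}_1$-density, with the norm identity coming from Tonelli — but you fill in a step the paper only gestures at: for surjectivity, where the paper simply remarks that ``the converse can be checked by the same manner,'' you take an arbitrary $g\in L^1(\Omega\times X,\mP\times m)$, show via Fubini that the slice map $\omega\mapsto g(\omega,\cdot)$ is weakly measurable and fibrewise in $L^1(X,m)$ with integrable norm, and then upgrade weak to strong measurability by Pettis's theorem, making explicit where the separability of $L^1(X,m)$ (and hence the Polish hypothesis on $X$) actually enters. This is a cleaner and more self-contained treatment of the surjectivity direction than the paper's sketch, though it lands on the same isomorphism.
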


\begin{proof}
By separability of $L^1(\Omega,L^1(X,m))$ and $L^1(\Omega\times X,\mP\times m)$, we only show countable bases $\{u_n\}_n\in L^1(\Omega,L^1(X,m))$ are naturally embedded densely in $L^1(\Omega\times X,\mP\times m)$ and vice versa.
As countable bases in $L^1(\Omega,L^1$ $(X,m))$, we take simple functions $u_n=\sum_{k=1}^{N(n)}\tilde{u}_k^{(n)}1_{\Omega_k^{(n)}}$ where $\{\Omega_k^{(n)}:k=1,\dots,N(n),n\in\N\}$ generates $\mathcal{F}$ and $\tilde{u}_k$ are also dense simple functions in $L^1(X,m)$.
Thus $u_n$ forms $u_n=\sum_{k,l}\alpha_l1_{\Omega_k^{(n)}}1_{X_l^{(n)}}$ for $\alpha_l\in\mathbb{Q}$.
This generates $L^1(\Omega\times X,\mP\times m)$ and the converse can be checked by the same manner.
\end{proof}

From the above Lemma \ref{cong}, we have
\begin{align*}
L^1\left(\Omega,D(X,m)\right)\subset L^1\left(\Omega,L^1(X,m)\right)\cong L^1\left(\Omega\times X,\mP\times m\right)
\end{align*}
and we frequently identify $h\in L^1\left(\Omega,D(X,m)\right)$ as a function in $L^1(\Omega\times X,\mP\times m)$.
We can characterize a random invariant density $h\in L^1(\Omega,L^1$ $(X,m))$, a fixed point of $\mathscr{P}$, as a function of $L^1(\Omega\times X,\mP\times m)$.

\begin{proposition}\label{propA}
The following statements are true 
under the assumption that $\sigma$ is invertible and ergodic:
\begin{enumerate}
\item
The lift operator $\mathscr{P}$ can be naturally identified with a Markov operator over $L^1(\Omega\times X,\mP\times m)$ (this operator is also denoted by the same symbol);
\item
$h\in L^1\left(\Omega,D(X,m)\right)$ is a random invariant density if and only if $\mathscr{P}h=h$ as a function of $D(\Omega\times X,\mP\times m)$;
\item
the following diagram commutes:
\begin{align*}
\xymatrix{
L^1(\Omega,L^1(X,m))\ar[r]^-{\mathscr{P}}\ar[d]_-{\iota}\ar@{}[rd]|{\circlearrowright}&L^1(\Omega,L^1(X,m))\ar[d]^-{\iota}\\
L^1(\Omega\times X,\mP\times m)\ar[r]_-{\mathscr{P}}&L^1(\Omega\times X,\mP\times m)
}
\end{align*}
where
$\iota$ is the isometry arises in Lemma \ref{cong}.
\end{enumerate}
\end{proposition}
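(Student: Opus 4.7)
My plan is to prove the three statements in the order they are given, using Lemma \ref{cong} as the bridge between the Bochner-space picture and the product-space picture.

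For part (1), I would start with $f\in L^1(\Omega\times X,\mP\times m)$, identify it via $\iota^{-1}$ with a strongly measurable map $\omega\mapsto f_\omega\in L^1(X,m)$, and then define $\mathscr{P}f$ pointwise by $(\mathscr{P}f)(\omega,x)=(P_{\sigma^{-1}\omega}f_{\sigma^{-1}\omega})(x)$. The joint $(\omega,x)$-measurability of the result comes from Lemma A.6 in \cite{GQ} (used in Remark after Definition \ref{liftop}) together with Lemma \ref{cong}, so the formula actually defines an element of $L^1(\Omega\times X,\mP\times m)$. Linearity is immediate from linearity of each $P_\omega$, positivity is immediate from fiberwise positivity. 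For integral preservation, I would write
\begin{align*}
\int_{\Omega\times X}\mathscr{P}f\,d(\mP\times m)
=\int_\Omega\!\int_X P_{\sigma^{-1}\omega}f_{\sigma^{-1}\omega}\,dm\,d\mP(\omega)
=\int_\Omega\!\int_X f_{\sigma^{-1}\omega}\,dm\,d\mP(\omega),
\end{align*}
using Fubini and the fact that each $P_\omega$ preserves the mean, and then apply $\sigma$-invariance of $\mP$ (using invertibility of $\sigma$) to replace $\sigma^{-1}\omega$ by $\omega$ and recover $\opnorm{f}_1$. This shows $\mathscr{P}$ is Markov on $L^1(\Omega\times X,\mP\times m)$.

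For part (3), the commutativity of the diagram is essentially tautological once part (1) is set up: the right-hand $\mathscr{P}$ was defined by transporting the definition of Definition \ref{liftop} along $\iota$, so $\mathscr{P}\circ\iota=\iota\circ\mathscr{P}$ holds on simple functions of the form $\sum_{k,l}\alpha_{k,l}1_{\Omega_k}\otimes 1_{X_l}$ by direct computation, and extends by density and boundedness of both sides (Lemma \ref{cong} says the simple functions exhibited there form a dense set). I would just spell out that the two formulas produce the same element of $L^1(\Omega\times X,\mP\times m)$ under $\iota$.

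For part (2), I would argue by the substitution $\omega\leftrightarrow\sigma\omega$, legal because $\sigma$ is invertible and $\mP$-preserving. If $h\in L^1(\Omega,D(X,m))$ is a random invariant density then $P_\omega h_\omega=h_{\sigma\omega}$ $\mP$-a.e., equivalently $P_{\sigma^{-1}\omega}h_{\sigma^{-1}\omega}=h_\omega$ $\mP$-a.e., which is exactly $\mathscr{P}h=h$ in $L^1(\Omega,L^1(X,m))$ and hence, via $\iota$ and part (3), in $L^1(\Omega\times X,\mP\times m)$. Since $h_\omega\in D(X,m)$ a.e., Fubini gives $\opnorm{h}_1=1$ and $h\ge 0$, so $h\in D(\Omega\times X,\mP\times m)$. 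The converse direction reverses the substitution.

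The one mildly technical point — and the only place I expect any care to be needed — is verifying at the start of part (1) that $\mathscr{P}f$ really lies in $L^1(\Omega\times X,\mP\times m)$, i.e.\ that strong measurability in the Bochner sense translates through $\iota$ into joint measurability. This is handled by combining Lemma A.6 of \cite{GQ} (strong measurability of $\omega\mapsto P_{\sigma^{-1}\omega}f_{\sigma^{-1}\omega}$) with the density-of-simple-functions argument in the proof of Lemma \ref{cong}; everything else is either a one-line substitution or an application of Fubini.
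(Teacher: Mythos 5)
Your treatment of parts (1) and (3) is essentially the same as the paper's (Fubini plus $\sigma$-invariance for the Markov property, then density of simple functions for the commuting diagram). The problem is part (2). The nontrivial half of that equivalence is: given $h\in D(\Omega\times X,\mP\times m)$ with $\mathscr{P}h=h$, show that the slices $h_\omega$ actually lie in $D(X,m)$ for $\mP$-a.e.\ $\omega$. Your proposal assumes from the outset that $h\in L^1(\Omega,D(X,m))$ in \emph{both} directions (``the converse direction reverses the substitution''), so you never confront this step. But nothing about a fixed point of a Markov operator on the product space forces the fiberwise $L^1(X)$-norm $\lV h_\omega\rV_{L^1(X)}$ to equal $1$ almost everywhere --- a priori you only know $\int_\Omega\lV h_\omega\rV_{L^1(X)}\,d\mP=1$.

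This is exactly where the hypothesis that $\sigma$ is \emph{ergodic} enters, and it is the only place in the proposition where ergodicity is needed; the fact that you never invoked it should have been a warning sign. The paper's argument is: from $P_{\sigma^{-1}\omega}h_{\sigma^{-1}\omega}=h_\omega$ and the fact that each $P_\omega$ preserves the integral, one gets $\int_X h_{\sigma^{-1}\omega}\,dm=\int_X h_\omega\,dm$, so the set $\Omega_1=\{\omega:\int_X h_\omega\,dm<1\}$ is $\sigma$-invariant. Ergodicity forces $\mP(\Omega_1)\in\{0,1\}$, and $\mP(\Omega_1)=1$ is ruled out by $\int_{\Omega\times X}h\,d(\mP\times m)=1$; similarly for $\{\omega:\int_X h_\omega\,dm>1\}$. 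That normalisation argument is the substantive content of part (2) and your proof is missing it. Everything else in your proposal is fine.
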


\begin{proof}
(1):
Linearity and positivity of $\mathscr{P}$ are obvious and we only show Markov property.
For $\vphi\in L^1_+(\Omega\times X,\mP\times m)$, $\vphi_{\omega}\in L^1_+(X,m)$ for $\mP$-almost every $\omega\in\Omega$.
Then we have
\begin{align*}
\lV\mathscr{P}\vphi\rV_{L^1(\Omega\times X)}
&= \int_{\Omega}\int_X \lv P_{\sigma^{-1}\omega}\vphi_{\sigma^{-1}\omega}\rv dmd\mP(\omega)\\
&= \int_{\Omega}\int_X\lv\vphi_{\sigma^{-1}\omega}\rv dmd\mP(\omega)\\
&= \lV\vphi\rV_{L^1(\Omega\times X)}
\end{align*}
since $\sigma$ is $\mP$-preserving and $P_{\omega}$ is Markov for $\mP$-almost every $\omega\in\Omega$.

(2)
(Necessity):
Let $h\in D(\Omega\times X,\mP\times m)$ a fixed point of $\mathscr{P}$.
Then a measurable map $\omega\mapsto h_{\omega}\in L^1(X,m)$ satisfies $P_{\sigma^{-1}\omega}h_{\sigma^{-1}\omega}=h_{\omega}$ for $\mP$-almost every $\omega\in\Omega$.
We then check that $h_{\omega}\in D(X,m)$ for $\mathbb{P}$-almost every $\omega\in\Omega$.
If we set $\Omega_{1}\coloneqq\{\omega\in\Omega:\int_Xh_{\omega}dm<1\}$, then for $\omega\in\Omega_{1}$ we have
\begin{align*}
\int_Xh_{\sigma^{-1}\omega}dm=\int_XP_{\sigma^{-1}\omega}h_{\sigma^{-1}\omega}dm
=\int_Xh_{\omega}dm<1
\end{align*}
since $P_{\omega}$ is Markov and $P_{\sigma^{-1}\omega}h_{\sigma^{-1}\omega}=h_{\omega}$ for $\mP$-almost every $\omega\in\Omega$.
This implies that $\omega\in\Omega_{1}$ if and only if $\sigma^{-1}\omega\in\Omega_{1}$ and hence $\Omega_{1}$ is a $\sigma$-invariant set.
Then, by ergodicity of $(\sigma,\mP)$, $\Omega_{1}=\emptyset$ or $\Omega\pmod{\mP}$, but $\Omega_{1}$ cannot be the entire space $\Omega$ since $\int_{\Omega\times X}hd(\mP\times m)=1$.
Therefore, we conclude $\Omega_{1}=\emptyset\pmod{\mP}$ and we can also see that $\Omega_{2}\coloneqq\{\omega\in\Omega:\int_Xh_{\omega}dm>1\}$ is a $\mP$-null set as the same way.

(Sufficiency):
If a measurable map $\omega\mapsto h_{\omega}\in D(X,m)$ is a random invariant density, then $h\in L^1(\Omega\times X,\mP\times m)$; $h(\omega,x)=h_{\omega}(x)$ satisfies
\begin{align*}
\int_{\Omega\times X}hd(\mP\times m)=\int_{\Omega}\lV h_{\omega}\rV_{L^1(X)}d\mP=1
\end{align*}
and
\begin{align*}
(\mathscr{P}h)(\omega,x)=P_{\sigma^{-1}\omega}h_{\sigma^{-1}\omega}(x)=h_{\omega}(x)=h(\omega,x)
\end{align*}
for $\mP\times m$-almost every $(\omega,x)\in\Omega\times X$.

(3):
This follows from Lemma \ref{cong} and very definitions of $\mathscr{P}$ and $\iota$.
This completes the proof.
\end{proof}

An important example of the lift operator $\mathscr{P}$ of a Markov operator cocycle is the Perron--Frobenius operator of a skew product transformation of random transformations.

\begin{proposition}
Let $\Theta:\Omega\times X\to\Omega\times X$ be a $\mP\times m$ non-singular skew product transformation defined by
\begin{align*}
\Theta(\omega,x)=\left(\sigma\omega,T_{\omega}x\right)
\end{align*}
where $T_{\omega}:X\to X$ is a non-singular transformation for $\omega\in\Omega$.
Then the lift operator associated with the cocyle of $\mathcal{L}_{\omega}$ the Perron--Frobenius operator of $T_{\omega}$ is the Perron--Frobenius operator of $\Theta$.
\end{proposition}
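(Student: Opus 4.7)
The plan is to verify that $\mathscr{P}$ satisfies the defining duality relation \eqref{eq:0913} of the Perron--Frobenius operator of $\Theta$ on the product space, and then conclude by uniqueness. Since by Proposition \ref{propA}(1) the lift operator $\mathscr{P}$ is a well-defined Markov operator on $L^1(\Omega\times X, \mP\times m)$, and the Perron--Frobenius operator $\mathcal{L}_\Theta$ is the unique bounded operator on $L^1(\Omega\times X, \mP\times m)$ satisfying
\begin{equation*}
\int_{\Omega\times X} \mathcal{L}_\Theta F \cdot G \, d(\mP\times m) = \int_{\Omega\times X} F \cdot (G\circ \Theta) \, d(\mP\times m)
\end{equation*}
for all $F\in L^1(\Omega\times X, \mP\times m)$ and $G\in L^\infty(\Omega\times X, \mP\times m)$, it suffices to verify the same identity for $\mathscr{P}$.

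First I would fix such $F$ and $G$, write $F(\omega,x)=f_\omega(x)$ via the isometry of Lemma \ref{cong}, and apply Fubini together with the definition of $\mathscr{P}$ to get
\begin{equation*}
\int_{\Omega\times X} \mathscr{P}F \cdot G \, d(\mP\times m) = \int_\Omega \int_X \bigl(\mathcal{L}_{T_{\sigma^{-1}\omega}} f_{\sigma^{-1}\omega}\bigr)(x)\, G(\omega,x)\, dm(x)\, d\mP(\omega).
\end{equation*}
Next, for $\mP$-almost every $\omega$ the function $x\mapsto G(\omega,x)$ lies in $L^\infty(X,m)$, so I can apply the fiberwise duality \eqref{eq:0913} for $\mathcal{L}_{T_{\sigma^{-1}\omega}}$ inside the inner integral to replace it by $\int_X f_{\sigma^{-1}\omega}(x)\, G(\omega,T_{\sigma^{-1}\omega}x)\, dm(x)$.

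Finally, I would use the $\mP$-invariance of $\sigma$ to change variables $\omega \mapsto \sigma\omega$ in the outer integral, which converts the integrand to $\int_X f_\omega(x)\, G(\sigma\omega, T_\omega x)\, dm(x) = \int_X F(\omega,x)\, (G\circ\Theta)(\omega,x)\, dm(x)$. Reassembling via Fubini yields $\int_{\Omega\times X} F\cdot (G\circ\Theta)\, d(\mP\times m)$, which is exactly the defining property of $\mathcal{L}_\Theta$; hence $\mathscr{P}=\mathcal{L}_\Theta$.

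The only non-cosmetic point is the joint measurability needed to apply Fubini to $(\omega,x)\mapsto (\mathcal{L}_{T_{\sigma^{-1}\omega}}f_{\sigma^{-1}\omega})(x) \cdot G(\omega,x)$, but this is already built into the strong measurability assumption on $\omega\mapsto P_\omega$ and the well-definedness of $\mathscr{P}$ established in the remark after Definition \ref{liftop}; no substantive obstacle remains.
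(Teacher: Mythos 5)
Your proof is correct and follows essentially the same route as the paper: Fubini, the fiberwise duality for $\mathcal{L}_{T_{\sigma^{-1}\omega}}$, and a change of variables $\omega\mapsto\sigma\omega$ using $\mP$-invariance of $\sigma$. The only difference is cosmetic: the paper tests the duality against indicator functions of rectangles $1_{F\times A}$ and then invokes density of rectangles, whereas you test directly against general $G\in L^\infty(\Omega\times X,\mP\times m)$.
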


\begin{proof}
Let $F\in\mathcal{F}$ and $A\in\mathcal{A}$.
Then $\Theta^{-1}(F\times A)=\bigcup_{\omega\in\sigma^{-1} F}\{\omega\}\times T_{\omega}^{-1}A$.
For the Perron--Frobenius operator of $\Theta$, say $\mathcal{L}_{\Theta}$, write for any $\vphi\in L^1(\Omega\times X,\mP\times m)$
\begin{align*}
\int_{F\times A}\mathcal{L}_{\Theta}\vphi d(\mP\times m)
&= \int_{\Theta^{-1}\left(F\times A\right)}\vphi d(\mP\times m)\\
&= \int_{\sigma^{-1} F}\int_{T_{\omega}^{-1}A}\vphi_{\omega}dmd\mP(\omega)\\
&= \int_F\int_A\mathcal{L}_{\sigma^{-1}\omega}\vphi_{\sigma^{-1}\omega}dmd\mP(\omega)\\
&= \int_{F\times A}\mathscr{P}\vphi d(\mP\times m).
\end{align*}
Any measurable set in $\Omega\times X$ is approximated by a rectangle set and proof is completed.
\end{proof}

Equivalent conditions for the existence of a finite invariant density for a single Markov operator $P$ over $L^1(X,m)$ was given in \cite{T}.
If we replace $P$ over $L^1(X,m)$ by $\mathscr{P}$ over $L^1(\Omega\times X,\mP\times m)$ in \cite{T}, the following lemma is valid.

\begin{lemma}[Theorem 3.1 in \cite{T}]\label{X}
Let $\mathscr{P}$ be the lift operator for a given Markov operator cocycle $(P,\sigma)$.
Then the following are equivalent:
\begin{enumerate}
\item
There exists a finite invariant density $h\in L^1\left(\Omega\times X,\mP\times m)\right)$ for $\mathscr{P}$ such that $\mathscr{P}^{*n}1_{\supp h}(\omega,x)$ monotonically tends to $1$ for $\mP\times m$-almost every $(\omega,x)\in\Omega\times X$;
\item
$\{\mathscr{P}^n1_{\Omega\times X}\}_n$ is weakly precompact in $L^1(\Omega\times X,\mP\times m)$;
\item
$\mathscr{P}$ is weakly almost periodic i.e.,
for each $f\in L^1(\Omega\times X,\mP\times m)$, $\{\mathscr{P}^nf\}_n$ is weakly precompact;
\item
$\mathscr{P}$ is mean ergodic i.e.,
for each $f\in L^1(\Omega\times X,\mP\times m)$,
\begin{align*}
\lim_{n\to\infty}\frac{1}{n}\sum_{i=0}^{n-1}\mathscr{P}^if
\end{align*}
exists in strong sense.
\end{enumerate}
\end{lemma}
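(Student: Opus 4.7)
The plan is to reduce the statement to the single-operator version, which is precisely what \cite[Theorem 3.1]{T} provides. By Proposition \ref{propA}(1), the lift operator $\mathscr{P}$ is a genuine Markov operator on the ordinary $L^1$-space $L^1(\Omega\times X,\mP\times m)$, and Lemma \ref{cong} identifies this space isometrically with $L^1(\Omega,L^1(X,m))$. Under these identifications the four conditions in the lemma are literally the four conditions of \cite[Theorem 3.1]{T} specialised to $\mathscr{P}$, and so the equivalences transfer automatically.

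If one wished to reprove the equivalences directly in the present notation, I would run the standard cycle. The implication (3) $\Rightarrow$ (4) is the Yosida--Kakutani mean ergodic theorem applied to the contraction $\mathscr{P}$ on $L^1(\Omega\times X,\mP\times m)$: weak precompactness of every forward orbit upgrades to strong convergence of the Ces\`{a}ro averages. The reverse (4) $\Rightarrow$ (3) follows from a Kakutani-type criterion, since strong convergence of $\frac{1}{n}\sum_{i=0}^{n-1}\mathscr{P}^if$ together with $\lV \mathscr{P}\rV \le 1$ forces relative weak compactness of $\{\mathscr{P}^nf\}_n$. The step (3) $\Rightarrow$ (2) is trivial, and for (2) $\Rightarrow$ (1) one takes $h$ to be a weak cluster point of $\bigl\{\frac{1}{n}\sum_{i=0}^{n-1}\mathscr{P}^i 1_{\Omega\times X}\bigr\}_n$; this is automatically $\mathscr{P}$-invariant, and the monotone convergence $\mathscr{P}^{*n}1_{\supp h}\nearrow 1$ is extracted by a maximality argument (enlarge the support of $h$ greedily over conservative parts until no further room remains).

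The main obstacle is the direction (1) $\Rightarrow$ (2): starting from a single invariant density with the support-saturation property, one must deduce weak precompactness of the forward orbit of the constant function. The argument in \cite{T} pivots on the adjoint identity
\begin{equation*}
\int_{\Omega\times X} \mathscr{P}^n 1_{\Omega\times X}\cdot 1_{\supp h}\, d(\mP\times m) = \int_{\Omega\times X} 1_{\Omega\times X}\cdot \mathscr{P}^{*n}1_{\supp h}\, d(\mP\times m),
\end{equation*}
which together with $\mathscr{P}^{*n}1_{\supp h}\nearrow 1$ prevents mass from escaping the support of $h$ in the limit; combined with a Chacon--Ornstein-type domination of $\mathscr{P}^n 1_{\Omega\times X}$ by a constant multiple of $h$ on $\supp h$, this delivers uniform integrability and hence weak precompactness. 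Since the proof in \cite{T} uses only the Markov structure on a $\sigma$-finite $L^1$-space, it applies verbatim after Proposition \ref{propA}(1), so in practice no new work is required beyond invoking the cited theorem.
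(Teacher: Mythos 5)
Your proposal takes exactly the same approach as the paper: the paper's "proof" of Lemma \ref{X} is precisely the observation that, by Proposition \ref{propA}(1), the lift operator $\mathscr{P}$ is an ordinary Markov operator on the $\sigma$-finite (indeed probability) space $L^1(\Omega\times X,\mP\times m)$, so Theorem~3.1 of \cite{T} applies to $\mathscr{P}$ verbatim and yields the four equivalences. The additional sketch you give of how the equivalences would be proved directly (Yosida--Kakutani, Banach-limit/weak cluster point for the invariant density, the adjoint identity and uniform integrability for $(1)\Rightarrow(2)$) is a fair outline of the argument contained in \cite{T}, but it is not needed for the lemma as stated, since the reduction already settles the matter.
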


\section{Proof of Theorem \ref{thmA}}

The proof of our theorem is to relate ``fiberwise'' weak precompactness of $\{ P_{\sigma^{-n}\omega}^{(n)}1_X\}_n$ and ``global'' weak precompactness of $\left\{\mathscr{P}^n1_{\Omega\times X}\right\}_n$ and impose some fiberwise properties of $P_{\omega}$ into the properties of $\mathscr{P}$ based on \cite{G}.
We prepare a sequence of lemmas in order to prove Theorem \ref{thmA}.
Note that by Lemma \ref{X} the implications (1) $\Leftrightarrow$ (3) $\Leftrightarrow$ (4) in Theorem \ref{thmA} are obvious.

\begin{lemma}\label{XX}
The condition that $\{ P_{\sigma^{-n}\omega}^{(n)}1_X\}_n$ is weakly precompact for $\mP$-almost every $\omega\in\Omega$ implies
the condition that $\{\omega\mapsto  P_{\sigma^{-n}\omega}^{(n)}1_X\}_n$ is weakly precompact in $L^1(\Omega,L^1(X,m))$.
\end{lemma}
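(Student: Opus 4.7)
The plan is to reduce everything to the Dunford--Pettis criterion on the product space. By Lemma \ref{cong} together with part (II) of the remark following Definition \ref{wpc}, global weak precompactness of $\{\omega \mapsto P_{\sigma^{-n}\omega}^{(n)}1_X\}_n$ in $L^1(\Omega, L^1(X,m))$ is equivalent to weak precompactness of $f_n(\omega,x) := P_{\sigma^{-n}\omega}^{(n)}1_X(x)$ in $L^1(\Omega \times X, \mP \times m)$. Since each $P_\omega$ is mean-preserving, $\opnorm{f_n}_1 = 1$ for every $n$, so the only substantive task is to verify uniform integrability.

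For uniform integrability I would use the equivalent formulation $\sup_n \int (f_n - K)^+ \, d(\mP \times m) \to 0$ as $K \to \infty$ (equivalent to the usual tail-integral criterion for $L^1$-bounded nonnegative families via the elementary comparisons $\int (|f|-K)^+ dm \le \int_{\{|f|>K\}} |f| \, dm$ and $\int_{\{|f|>2K\}} |f| \, dm \le 2 \int (|f|-K)^+ dm$). The payoff is that the functional $\Psi_K : L^1(X,m) \to [0,\infty)$ defined by $\Psi_K(f) := \int (|f| - K)^+ \, dm$ is $1$-Lipschitz and dominated by $\|\cdot\|_{L^1(X)}$. Set
\[
h_K(\omega) := \sup_{n \in \N} \Psi_K\bigl(P_{\sigma^{-n}\omega}^{(n)}1_X\bigr).
\]
Since $\omega \mapsto P_{\sigma^{-n}\omega}^{(n)}1_X$ is strongly measurable (by induction from the standing strong-measurability hypothesis on $P$) and $\Psi_K$ is continuous, each term is measurable and hence so is the countable supremum $h_K$. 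The fiberwise hypothesis combined with Dunford--Pettis in $L^1(X,m)$ says that $\{P_{\sigma^{-n}\omega}^{(n)}1_X\}_n$ is uniformly integrable for $\mP$-a.e.\ $\omega$, so $h_K(\omega) \to 0$ as $K \to \infty$ pointwise $\mP$-a.e. Since $0 \le h_K \le 1$, dominated convergence yields $\int_\Omega h_K \, d\mP \to 0$, and Fubini gives the uniform bound
\[
\int (f_n - K)^+ \, d(\mP \times m) = \int_\Omega \Psi_K\bigl(P_{\sigma^{-n}\omega}^{(n)}1_X\bigr) \, d\mP \le \int_\Omega h_K \, d\mP,
\]
independent of $n$, finishing the proof.

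The main obstacle is the measurability of $h_K$: the naive choice $\omega \mapsto \sup_n \int_{\{P_{\sigma^{-n}\omega}^{(n)}1_X > K\}} P_{\sigma^{-n}\omega}^{(n)}1_X \, dm$ is problematic because $f \mapsto \int_{\{f>K\}} f \, dm$ is not continuous on $L^1(X,m)$, so one cannot directly compose it with a strongly measurable $L^1$-valued map. Replacing this quantity by the Lipschitz proxy $\Psi_K$ sidesteps the difficulty at the cost of invoking the standard equivalence between the $\Psi_K$-criterion and the usual tail-integral criterion for uniform integrability in $L^1$. Everything else is bookkeeping: Fubini, dominated convergence, and one application of Dunford--Pettis in the fiber space $L^1(X,m)$ together with one in the product space $L^1(\Omega \times X, \mP \times m)$.
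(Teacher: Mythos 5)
Your proof is correct, but it takes a genuinely different route than the paper's. The paper works directly in the Bochner space $L^1(\Omega,L^1(X,m))$: from the Markov property one gets $\lV P_{\sigma^{-n}\omega}^{(n)}1_X\rV_{L^1(X)}\equiv 1$, so the scalar norm functions are trivially bounded and uniformly integrable; combining this with the fiberwise hypothesis, the paper invokes Ghenciu's Corollary~2.4(i) to conclude that the sequence is weakly Cauchy in $L^1(\Omega,L^1(X,m))$, and then uses Talagrand's result that $L^1(\Omega,\mathfrak{X})$ inherits weak sequential completeness from $\mathfrak{X}=L^1(X,m)$ to upgrade weakly Cauchy to weakly convergent. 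You instead push the whole problem through the isometry $L^1(\Omega,L^1(X,m))\cong L^1(\Omega\times X,\mP\times m)$ and verify the scalar Dunford--Pettis criterion on the product space, with the Lipschitz proxy $\Psi_K(f)=\int(|f|-K)^+\,dm$ doing the work of producing a measurable, dominated modulus $h_K(\omega)=\sup_n\Psi_K(P_{\sigma^{-n}\omega}^{(n)}1_X)$ so that dominated convergence and Fubini give uniform integrability on $\Omega\times X$. Your argument is more elementary and self-contained --- it avoids citing Ghenciu and Talagrand entirely and replaces them with one application of scalar Dunford--Pettis --- but it leans on the special structure $\mathfrak{X}=L^1(X,m)$ (positivity, the product-space isometry, and Dunford--Pettis), whereas the paper's route through weak Cauchy sequences in $L^1(\Omega,\mathfrak{X})$ is the one that scales to an arbitrary weakly sequentially complete Banach space, which is precisely what is reused in the proof of Theorem~\ref{MET}. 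The care you take with measurability of $h_K$ (using $\Psi_K$ instead of the discontinuous tail integral) is exactly the right move and is the only genuinely delicate point in this alternative argument.
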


\begin{proof}
Suppose $\{ P_{\sigma^{-n}\omega}^{(n)}1_X\}_n$ is weakly precompact for $\mathbb{P}$-almost every $\omega\in\Omega$.
Since $P_{\omega}$ is Markov for $\mathbb{P}$-almost every $\omega\in\Omega$, it holds for any $n\ge1$ and $E\in\mathcal{F}$
\begin{align*}
\int_E\left\lVert P_{\sigma^{-n}\omega}^{(n)}1_X\right\rVert_{L^1(X)}d\mathbb{P}(\omega)=\int_E1d\mathbb{P}=\mathbb{P}(E)
\end{align*}
and hence $\{\omega\mapsto  P_{\sigma^{-n}\omega}^{(n)}1_X\}_n$ 
is bounded and uniform integrable in $L^1(\Omega,L^1(X,m))$.
Thus $\{\omega\mapsto  P_{\sigma^{-n}\omega}^{(n)}1_X\}_n$ is weakly Cauchy in $L^1(\Omega,L^1(X,m))$ from Corollary 2.4 (i) in \cite{G}.
Note that $L^1(X,m)$ is weakly sequentially complete  so that $L^1(\Omega,L^1(X,m))$ is also weakly sequentially complete (see \cite{Ta}), and hence we conclude that $\{\omega\mapsto  P_{\sigma^{-n}\omega}^{(n)}1_X\}_n$ is weakly precompact in $L^1(\Omega,L^1(X,m))$.
\end{proof}

\begin{lemma}\label{wp}
The implication (6) $\Rightarrow$ (3) is true.
\end{lemma}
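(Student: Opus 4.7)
The plan is to derive (3) almost immediately by chaining together Lemma~\ref{XX}, Lemma~\ref{cong} and Proposition~\ref{propA}(3). Concretely, condition~(6) is precisely the hypothesis of Lemma~\ref{XX}, so Lemma~\ref{XX} at once upgrades fiberwise weak precompactness of $\{P_{\sigma^{-n}\omega}^{(n)}1_X\}_n$ to global weak precompactness of $\{\omega\mapsto P_{\sigma^{-n}\omega}^{(n)}1_X\}_n$ in the Bochner space $L^1(\Omega,L^1(X,m))$.

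Next, the isometric isomorphism $\iota:L^1(\Omega,L^1(X,m))\to L^1(\Omega\times X,\mP\times m)$ from Lemma~\ref{cong} carries the function $\omega\mapsto P_{\sigma^{-n}\omega}^{(n)}1_X$ to the function $(\omega,x)\mapsto (P_{\sigma^{-n}\omega}^{(n)}1_X)(x)$, which by Definition~\ref{liftop} and Proposition~\ref{propA}(3) equals $\mathscr{P}^n 1_{\Omega\times X}$. Since $\iota$ is a surjective linear isometry of Banach spaces, its adjoint is a linear isomorphism between the duals, and hence $\iota$ is a homeomorphism for the corresponding weak topologies. Weak precompactness is therefore preserved by $\iota$, so the global weak precompactness obtained in the previous step transfers to global weak precompactness of $\{\mathscr{P}^n 1_{\Omega\times X}\}_n$ in $L^1(\Omega\times X,\mP\times m)$, which is exactly condition~(3).

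No step presents a serious obstacle once Lemma~\ref{XX} is in hand: the only point worth highlighting is the routine but essential observation that an isometric isomorphism of Banach spaces preserves weak precompactness, which legitimises the transfer of the property across $\iota$. In particular we never need to revisit uniform integrability estimates or the Dunford--Pettis type arguments used inside Lemma~\ref{XX}; they have already done the work in passing from the fiberwise hypothesis to the Bochner space.
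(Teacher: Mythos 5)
Your argument is correct and follows essentially the same route as the paper: first use Lemma~\ref{XX} to upgrade the fiberwise hypothesis to weak precompactness of $\{\omega\mapsto P_{\sigma^{-n}\omega}^{(n)}1_X\}_n$ in $L^1(\Omega,L^1(X,m))$, then transfer this through the isometry of Lemma~\ref{cong} to $\{\mathscr{P}^n1_{\Omega\times X}\}_n\subset L^1(\Omega\times X,\mP\times m)$. The only cosmetic difference is that the paper invokes the concrete duality $L^\infty(\Omega\times X,\mP\times m)\cong\bigl(L^1(\Omega,L^1(X,m))\bigr)^*$ where you instead appeal to the general fact that a surjective linear isometry is a weak homeomorphism and hence preserves weak precompactness; both formulations carry the same content.
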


\begin{proof}
By Lemma \ref{XX}, $\{\omega\mapsto P_{\sigma^{-n}\omega}^{(n)}1_X\}_n$ is weakly precompact.
Moreover, by Lemma \ref{cong}, we have
\begin{align*}
L^{\infty}(\Omega\times X,\mP\times m)\cong\left(L^1\left(\Omega,L^1(X,m)\right)\right)^*
\end{align*}
and so $\{\mathscr{P}^n1_{\Omega\times X}\}_n$ is weakly precompact in $L^1(\Omega\times X,\mP\times m)$.
\end{proof}

\begin{lemma}\label{bl}
The implication (3) $\Rightarrow$ (2) is true.
\end{lemma}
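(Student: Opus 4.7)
My plan is to lift the Banach-limit construction to the product space $\Omega\times X$, use the uniform integrability forced by condition (3) to upgrade finite additivity to $\sigma$-additivity at the global level, and then disintegrate to recover $\mu_\omega$. The cocycle invariance will then drop out of the shift-invariance of $\LIM$ together with Proposition~\ref{propA}(2).

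Concretely, I would define the positive linear functional
$$
\psi(g):=\LIM_n\int g\cdot\mathscr{P}^n 1_{\Omega\times X}\,d(\mP\times m)\qquad\text{for }g\in L^{\infty}(\Omega\times X,\mP\times m),
$$
which satisfies $\psi(1_{\Omega\times X})=1$. Condition (3) together with the Dunford--Pettis theorem gives uniform integrability of $\{\mathscr{P}^n 1_{\Omega\times X}\}_n$, so whenever $G_k\searrow\emptyset$ in $\mathcal F\otimes\mathcal A$ we have $(\mP\times m)(G_k)\to 0$ and therefore $\psi(1_{G_k})\le\sup_n\int_{G_k}\mathscr{P}^n 1_{\Omega\times X}\,d(\mP\times m)\to 0$. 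This gives both $\sigma$-additivity of $\psi$ and absolute continuity of the representing probability measure $\tilde\mu$ (defined by $\psi(g)=\int g\,d\tilde\mu$) with respect to $\mP\times m$. Writing $\tilde\mu=h\cdot(\mP\times m)$ with $h\in L^1_+(\Omega\times X)$ and using Polishness of $\Omega$ and $X$ to disintegrate $\tilde\mu=\int\mu_\omega\,d\mP(\omega)$, I obtain $\mu_\omega=h_\omega\cdot m\ll m$ for $\mP$-a.e. $\omega$, which is the absolutely continuous probability measure on $(X,\mathcal A)$ required in (2).

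For the invariance, shift-invariance of $\LIM$ combined with the adjoint relation yields, for every $g\in L^{\infty}(\Omega\times X)$,
\begin{align*}
\int g\cdot\mathscr{P}h\,d(\mP\times m)
&=\int(\mathscr{P}^{*}g)h\,d(\mP\times m)
=\LIM_n\int(\mathscr{P}^{*}g)\,\mathscr{P}^n 1_{\Omega\times X}\,d(\mP\times m) \\
&=\LIM_n\int g\cdot\mathscr{P}^{n+1}1_{\Omega\times X}\,d(\mP\times m)
=\LIM_n\int g\cdot\mathscr{P}^n 1_{\Omega\times X}\,d(\mP\times m)
=\int gh\,d(\mP\times m),
\end{align*}
so $\mathscr{P}h=h$, and Proposition~\ref{propA}(2) translates this into $P_\omega h_\omega=h_{\sigma\omega}$ for $\mP$-almost every $\omega$, i.e. $P_\omega\frac{d\mu_\omega}{dm}=\frac{d\mu_{\sigma\omega}}{dm}$.

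The main obstacle I anticipate is pinning down the pointwise identity $\mu_\omega(E)=\LIM_n\int_E P_{\sigma^{-n}\omega}^{(n)} 1_X\,dm$ from the statement of (2), since $\LIM$ is a Hahn--Banach object and hence $\omega\mapsto\LIM_n\int_E P_{\sigma^{-n}\omega}^{(n)} 1_X\,dm$ need not be $\mathcal F$-measurable a priori. To handle this I would fix a countable algebra $\mathcal A_0$ generating $\mathcal A$ and exploit, for each $E\in\mathcal A_0$ and $F\in\mathcal F$, the identity
$$
\int_F\mu_\omega(E)\,d\mP(\omega)=\tilde\mu(F\times E)=\LIM_n\int_F\int_E P_{\sigma^{-n}\omega}^{(n)} 1_X\,dm\,d\mP(\omega)
$$
to pin down $\mu_\omega(E)$ as the fiberwise Banach limit on a $\mP$-full set chosen jointly over $\mathcal A_0$, and then extend from $\mathcal A_0$ to $\mathcal A$ via Dynkin's $\pi$-$\lambda$ theorem combined with the countable additivity of $\mu_\omega$ already established.
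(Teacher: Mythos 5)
Your global route differs from the paper's: the paper defines the fiberwise set function $\mu_\omega(f)=\LIM\bigl(\{\int_X (P_{\sigma^{-n}\omega}^{(n)})^* f\,dm\}_n\bigr)$ directly and proves its $\sigma$-additivity from the fiberwise weak precompactness of $\{P_{\sigma^{-n}\omega}^{(n)}1_X\}_n$ (condition (6), used interchangeably with (3) via the established equivalences), then verifies $P_\omega$-invariance by the shift-invariance of $\LIM$ fiberwise. Your construction of $\tilde\mu$ on $\Omega\times X$, the use of Dunford--Pettis for $\sigma$-additivity, and the argument that $\mathscr P h=h$ followed by Proposition~\ref{propA}(2) are all sound as far as they go.

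However, the ``main obstacle'' you anticipate is in fact a genuine gap, and the workaround you sketch does not close it. Condition (2) asserts that the \emph{specific} set function $E\mapsto\LIM\bigl(\{\int_E P_{\sigma^{-n}\omega}^{(n)}1_X\,dm\}_n\bigr)$ is a measure for $\mP$-a.e.\ $\omega$. The identity
\begin{align*}
\int_F\mu_\omega(E)\,d\mP(\omega)=\tilde\mu(F\times E)=\LIM_n\int_F\int_E P_{\sigma^{-n}\omega}^{(n)}1_X\,dm\,d\mP(\omega)
\end{align*}
does not yield $\mu_\omega(E)=\LIM_n\int_E P_{\sigma^{-n}\omega}^{(n)}1_X\,dm$ for a.e.\ $\omega$, because a Banach limit is not an honest limit and does not commute with $\int_F(\cdot)\,d\mP$. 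If for a.e.\ $\omega$ the bounded scalar sequence $\{\int_E P_{\sigma^{-n}\omega}^{(n)}1_X\,dm\}_n$ fails to converge, different Banach limits assign different values, and nothing in the averaged identity constrains the fiberwise $\LIM$ to agree with the disintegrated density $h_\omega$; the $\pi$--$\lambda$ extension step runs into the same obstruction, since it only inherits whatever is established on the countable algebra. To repair the argument you would need to first control the fiberwise sequence itself --- e.g.\ invoke the implication (3)~$\Rightarrow$~(6) to get fiberwise weak precompactness, as the paper does, or (3)~$\Rightarrow$~(5) to get fiberwise Ces\`aro convergence (which does pin down every Banach limit). Either way, a fiberwise input is unavoidable, and your global disintegration alone cannot produce the pointwise Banach-limit formula that (2) demands.
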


\begin{proof}
Define $\omega\mapsto\mu_{\omega}$ a map from $\Omega$ to the space of linear functional on $L^{\infty}(X,m)$ by, for each $\omega\in\Omega$ and $f\in L^{\infty}(X,m)$,
\begin{align*}
\mu_{\omega} (f) \coloneqq \LIM \left(\left\{ \int_X \left( P_{\sigma^{-n}\omega}^{(n)}\right)^* f dm \right\}_n\right)
\end{align*}
for a fixed Banach limit $\LIM$.
We will show that $\mu_{\omega}(A)\coloneqq\mu_{\omega}(1_A)$ is an $m$-absolutely continuous measure such that
\begin{align*}
P_{\omega}\frac{d\mu_{\omega}}{dm}=\frac{d\mu_{\sigma\omega}}{dm}.
\end{align*}
A set function $\mu_{\omega}:\mathcal{A}\to[0,1]$ is obviously absolutely continuous with respect to $m$ for each $\omega$.
Hence we show $\sigma$-additivity and $P_{\omega}$-invariance of $\mu_{\omega}$.

To see countable additivity of $\mu_{\omega}$, let $A=\bigcup_{i\ge1}A_i$ a disjoint union of measurable sets.
Then we have
\begin{align*}
\mu_{\omega} (A) &= \LIM \left(\left\{ \int_{\bigcup_{i\ge1}A_i}  P_{\sigma^{-n}\omega}^{(n)} 1_X dm \right\}_n\right)\\
&= \LIM \left(\left\{ \sum_{i=1}^{\infty}\int_{A_i}  P_{\sigma^{-n}\omega}^{(n)} 1_X dm \right\}_n\right)
\end{align*}
and weak precompactness of $\{  P_{\sigma^{-n}\omega}^{(n)}1_X \}_n$ implies
\begin{align*}
\mu_{\omega}(A) = \LIM\left(\sum_{i=1}^{\infty}\left\{ \int_{A_i}  P_{\sigma^{-n}\omega}^{(n)} 1_X dm \right\}_n\right)
=\sum_{i=1}^{\infty} \mu_{\omega}(A_i).
\end{align*}

Next we show for any $A\in\mathcal{A}$,
\begin{align*}
\int_A P_{\omega} \frac{d\mu_{\omega}}{dm} dm = \mu_{\sigma\omega} (A).
\end{align*}
This equality follows from the calculation
\begin{align*}
\mu_{\omega}\left( P_{\omega}^*1_A \right)
&= \LIM\left(\left\{ \int_X \left( P_{\sigma^{-n}\omega}^{(n)}\right)^* \circ P_{\omega}^* 1_A \right\}_n\right)\\
&= \LIM\left(\left\{ \int_X \left(P_{\sigma^{-n}\omega}^{(n+1)}\right)^* 1_A \right\}_n\right)\\
& = \LIM\left(\left\{ \int_X \left(P_{\sigma^{-(n-1)}\omega}^{(n)}\right)^* 1_A \right\}_n\right)
=\mu_{\sigma\omega}(A).
\end{align*}
Thus 
\begin{align*}
P_{\omega}\frac{d\mu_{\omega}}{dm}=\frac{d\mu_{\sigma\omega}}{dm}
\end{align*}
holds for $m$-almost every $x\in X$.
\end{proof}

\begin{lemma}\label{wap}
The implication (6) $\Rightarrow$ (7) is true, i.e., (6) $\Leftrightarrow$ (7).
\end{lemma}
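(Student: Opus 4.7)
Since $1_X \in L^1(X,m)$, the implication $(7) \Rightarrow (6)$ is immediate, so the content of the lemma is $(6) \Rightarrow (7)$. The plan is to use the Dunford--Pettis characterization of weakly precompact subsets of $L^1(X,m)$ (bounded plus uniformly integrable) and to reduce, by a truncation plus $L^1$-approximation argument, the weak precompactness of $\{P_{\sigma^{-n}\omega}^{(n)}f\}_n$ for a general $f$ to that of $\{P_{\sigma^{-n}\omega}^{(n)}1_X\}_n$ already supplied by (6).

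Fix $\omega$ in the full $\mP$-measure set on which (6) holds. Because $P_{\omega'}$ is Markov for $\mP$-almost every $\omega'$, the cocycle $P_{\sigma^{-n}\omega}^{(n)}$ is an $L^1$-contraction, so $\{P_{\sigma^{-n}\omega}^{(n)}f\}_n$ is automatically norm-bounded by $\lV f\rV_{L^1(X)}$; only uniform integrability remains. By splitting $f = f^+ - f^-$ and invoking linearity, I can restrict to $f \in L^1_+(X,m)$.

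For such $f$ and $\vep>0$, I would truncate: put $f_k = \min(f,k)$ and $r_k = f - f_k$, so that $f_k \le k\cdot 1_X$ and $\lV r_k\rV_{L^1(X)} \to 0$ as $k\to\infty$. Choose $k$ so large that $\lV r_k\rV_{L^1(X)} < \vep/2$. By positivity and linearity of the Markov operators, $0 \le P_{\sigma^{-n}\omega}^{(n)}f_k \le k\,P_{\sigma^{-n}\omega}^{(n)}1_X$. Hypothesis (6) combined with Dunford--Pettis gives uniform integrability of $\{P_{\sigma^{-n}\omega}^{(n)}1_X\}_n$, so there is $\delta>0$ (depending on $\omega$ and $k$ but not on $n$) with
\begin{align*}
m(A)<\delta\ \Longrightarrow\ \int_A P_{\sigma^{-n}\omega}^{(n)}1_X\,dm < \frac{\vep}{2k}\quad\text{for all }n\ge 1.
\end{align*}
Combined with the contraction bound $\int_A P_{\sigma^{-n}\omega}^{(n)}r_k\,dm \le \lV r_k\rV_{L^1(X)} < \vep/2$, this yields $\int_A P_{\sigma^{-n}\omega}^{(n)}f\,dm < \vep$ whenever $m(A)<\delta$, establishing the required uniform integrability and hence weak precompactness fiberwise.

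There is no serious obstacle in this argument; it is a standard Dunford--Pettis style estimate made possible by the $L^1$-contraction property of Markov operators. The only point worth flagging is that the splitting parameter $\delta$ depends on both $k$ and $\omega$, but this is harmless: $\omega$ is fixed from the outset and all estimates need only be uniform in $n$. This completes the plan for $(6)\Rightarrow(7)$, and together with the trivial reverse implication yields $(6)\Leftrightarrow(7)$.
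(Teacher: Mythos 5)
Your proposal is correct and follows essentially the same route as the paper's proof: both reduce to uniform integrability for nonnegative $f$, approximate $f$ in $L^1$ by a function dominated by a multiple of $1_X$, then control the approximant via hypothesis (6) and the remainder via the $L^1$-contraction property of Markov cocycles. The only cosmetic difference is that you truncate ($f_k=\min(f,k)\le k\cdot 1_X$) whereas the paper uses a simple-function approximation $\sum_i\alpha_i 1_{A_i}\le(\max_i\alpha_i)1_X$; the two devices serve the same purpose.
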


\begin{proof}
From the condition (6), there is a set $\Omega_0\in\mathcal{F}$ with $\mP(\Omega\setminus\Omega_0)=0$ such that for any $\omega\in\Omega_0$, $\{P^{(n)}_{\sigma^{-n}\omega}f\}_n$ is weakly precompact.
Now we fix $\omega\in\Omega_0$.
Note that weak precompactness of $\{P_{\sigma^{-n}\omega}^{(n)}f\}_n$ is equivalent to its uniform integrability by Remark \ref{remthmA} (III)-(iii).
We only have to show for any $\vep>0$ and $f\in L^1_+(X,m)$, there exists $\delta=\delta(\vep,f)>0$ such that for each $n\in\N$, $m(A)<\delta$ implies $\int_A P_{\sigma^{-n}\omega}^{(n)}fdm<\vep$
since $\lvert P_{\sigma^{-n}\omega}^{(n)}f\rvert\le P_{\sigma^{-n}\omega}^{(n)}\lv f\rv$ for any $f\in L^1(X,m)$.
For fixed $\vep>0$ and $f\in L^1_+(X,m)$, we take a simple function $\sum_{i=1}^K\alpha_i1_{A_i}$ such that $\lVert f-\sum_{i=1}^K\alpha_i1_{A_i}\rVert_{L^1(X)}<\vep/2$.
For this $K=K(\vep,f)$ and $\alpha\coloneqq\max_{1\le i\le K}\alpha_i$, it follows from our assumption that we can choose $\delta>0$ such that
\begin{align*}
m(A)<\delta \text{ implies } \int_A P_{\sigma^{-n}\omega}^{(n)}1_Xdm<\frac{\vep}{2K\alpha}.
\end{align*}
Therefore we have for any $A\in\mathcal{A}$ with $m(A)<\delta$
\begin{align*}
\int_A P_{\sigma^{-n}\omega}^{(n)}fdm
&\le \lV  P_{\sigma^{-n}\omega}^{(n)}\left(f-\sum_{i=1}^K\alpha_i1_{A_i}\right)\rV_{L^1(X)} + \int_A P_{\sigma^{-n}\omega}^{(n)}\sum_{i=1}^K\alpha_i1_{A_i}dm<\vep
\end{align*}
and our claim is proven.
\end{proof}

\noindent
{\bf Proof of Theorem \ref{thmA}}.
(1) $\Leftrightarrow$ (3) $\Leftrightarrow$ (4) $\Leftrightarrow$ (6) $\Leftrightarrow$ (7): See Lemma \ref{X}, \ref{wp} and \ref{wap}.

(3) $\Rightarrow$ (2): This follows from Lemma \ref{bl}.

(2) $\Rightarrow$ (6): Suppose the set function defined by (\ref{eqbl}) satisfies countable additivity for $\mP$-almost every $\omega\in\Omega$.
Then we have for $\mP$-almost every $\omega\in\Omega$ 
\begin{align*}
\sup_n \lv\int_E P_{\sigma^{-n}\omega}^{(n)}1_Xdm\rv\to 0
\quad\text{as}\quad
m(E)\to0
\end{align*}
and hence $\{P_{\sigma^{-n}\omega}^{(n)}1_X\}_n$ is weakly precompact.

(1) $\Rightarrow$ (5): Mean ergodicity of $\mathscr{P}$ follows from Lemma \ref{X}.
Thus, for any $\vphi\in L^1(\Omega\times X,\mP\times m)$, the following limit exists in strong sense:
\begin{align*}
\lim_{n\to\infty}\frac{1}{n}\sum_{i=0}^{n-1}\mathscr{P}^n\vphi.
\end{align*}
Looking at $\omega$-section of this limit, for $\mP$-almost every $\omega\in\Omega$, we have
\begin{align*}
\lim_{n\to\infty}\frac{1}{n}\sum_{i=0}^{n-1}P_{\sigma^{-n}\omega}^{(n)}f
\end{align*}
for any $f\in L^1(X,m)$ since the operator of taking $\omega$-section commutes with limit.

(5) $\Rightarrow$ (2): If the condition (5) is satisfied, then any Banach limit coincides with the Ces\`{a}ro average and Vitali--Hahn--Saks's theorem guarantees that the set function defined by (\ref{eqbl}) becomes a measure. This completes the proof.

\section{Proof of Proposition \ref{prop}}

\begin{proof}
Suppose there is a set $\Omega_0\in\mathcal{F}$ with $\mP(\Omega\setminus\Omega_0)=0$ such that for each $\omega\in\Omega_0$ we have the assertion in the assumption of Proposition \ref{prop} and fix an arbitrary $\omega\in\Omega_0$.
For any fixed $g\in L^1_+(X,m)$ we have to show that for any $\vep>0$ there exists $\delta>0$ such that $m(A)<\delta$ implies $\int_{A}P_{\sigma^{-n}\omega}^{(n)}gdm<\vep$.

First we take $g'\in L^{\infty}_+(X,m)$ such that $\lV g-g'\rV_{L^1(X)}<\vep/3$.
Write for any $\alpha>0$
\begin{align*}
\int_{X\setminus F_{\alpha,n}}g'\left(P_{\sigma^{-n}\omega}^{(n)}\right)^*1_Adm
\le \lV g'\rV_{L^{\infty(X)}}m(X\setminus F_{\alpha,n})
\end{align*}
for any $A\in\mathcal{A}$.
Then by assumption we can find $\alpha>0$ so that for any $n\in\N$ we have $\lV g'\rV_{L^{\infty}(X)}m(X\setminus F_{\alpha,n})<\vep/3$.

On the other hand, we have
\begin{align*}
\int_{F_{\alpha,n}}g'\left( P_{\sigma^{-n}\omega}^{(n)}\right)^*1_Adm
&\le \lV g'\rV_{L^{\infty}(X)}\int_{F_{\alpha,n}}\left( P_{\sigma^{-n}\omega}^{(n)}\right)^*1_Adm\\
&< \lV g'\rV_{L^{\infty}(X)}\int_{F_{\alpha,n}}\frac{h_{\sigma^{-n}\omega}}{\alpha}\left( P_{\sigma^{-n}\omega}^{(n)}\right)^*1_Adm\\
&< \frac{\lV g'\rV_{L^{\infty}(X)}}{\alpha}\int_Ah_{\omega}dm.
\end{align*}
if we choose $\delta$ so that $m(A)<\delta$ implies $\int_Ah_{\omega}dm<\frac{\alpha\vep}{3\lV g'\rV_{L^{\infty}(X)}}$,
the last term in the above inequality is bounded above by $\vep/3$.

Therefore, we have
\begin{align*}
\int_A P_{\sigma^{-n}\omega}^{(n)}g'dm=\int_{F_{\alpha,n}\cup(X\setminus F_{\alpha,n})}g'\left( P_{\sigma^{-n}\omega}^{(n)}\right)^*1_Adm<\frac{2}{3}\vep
\end{align*}
and hence for any $\vep>0$ there exists $\delta>0$ such that $m(A)<\delta$ implies for any $n\in\N$,
\begin{align*}
\int_A P_{\sigma^{-n}\omega}^{(n)} gdm\le\int_A P_{\sigma^{-n}\omega}^{(n)}g'dm+\lV g-g'\rV_{L^1(X)}<\vep.
\end{align*}
This completes the proof.
\end{proof}

\section{Proof of Theorem \ref{MET}}

The key idea of the proof is based on the conventional mean ergodic theorem by Yosida and Kakutani \cite{YK}. We first prepare the following lemma.

\begin{lemma}\label{lemHB}
Assume that the sequence $\{\mathscr{A}^nf\}_n$ is fiberwise weakly precompact for $f\in L^1(\Omega,\mathfrak{X})$, that is, there exists  $h\in L^1(\Omega,\mathfrak{X})$ such that for $\mathbb{P}$-almost every $\omega\in\Omega$, there is a subsequence $\{n_k(\omega)\}_k$ for which $(\mathscr{A}^{n_k(\omega)}f)(\omega)$ weakly converges to $h(\omega)$. Then,
for any $\varepsilon>0$, there exist $g,r\in L^1(\Omega,\mathfrak{X})$ with
$\opnorm{r}_1<\varepsilon$ such that 
\begin{eqnarray}\label{HB}
f-h =\mathscr{P}g-g +r.
\end{eqnarray}
\end{lemma}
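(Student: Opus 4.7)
The plan is to adapt the classical Yosida--Kakutani proof of the mean ergodic theorem to the Bochner-space setting $L^1(\Omega,\mathfrak{X})$, with extra care because the hypothesis is only fiberwise weak precompactness.

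The first step is an Abel-summation identity inside $L^1(\Omega,\mathfrak{X})$: setting $u_n \coloneqq -\frac{1}{n}\sum_{j=0}^{n-1}(n-1-j)\mathscr{P}^j f$, a telescoping computation gives $(\mathscr{P}-I)u_n = f-\mathscr{A}^n f$. By linearity, for any convex weights $\alpha_i\ge 0$ with $\sum_i\alpha_i=1$, $(\mathscr{P}-I)\bigl(\sum_i\alpha_i u_{n_i}\bigr)=f-\sum_i\alpha_i\mathscr{A}^{n_i}f$. Hence the lemma is equivalent to producing a convex combination $v=\sum_i\alpha_i\mathscr{A}^{n_i}f$ with $\opnorm{v-h}_1<\varepsilon$: one then takes $g=\sum_i\alpha_i u_{n_i}$ and $r=v-h$. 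In other words, the statement reduces to showing that $h$ lies in the norm closure of $\operatorname{co}\{\mathscr{A}^n f:n\in\mathbb{N}\}$ inside $L^1(\Omega,\mathfrak{X})$.

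The second step is Mazur plus Hahn--Banach: the norm closure of this convex set coincides with its weak closure, so it is enough to verify that for every $\Phi\in L^1(\Omega,\mathfrak{X})^*$, $\inf_n\Phi(\mathscr{A}^n f)\le\Phi(h)\le\sup_n\Phi(\mathscr{A}^n f)$. For $\Phi$ represented as $\Phi(\cdot)=\int_\Omega\langle\,\cdot\,(\omega),\psi(\omega)\rangle\,d\mP$ with $\psi\in L^\infty(\Omega,\mathfrak{X}^*)$, the fiberwise hypothesis puts the real number $\langle h(\omega),\psi(\omega)\rangle$ between $\liminf_n\langle\mathscr{A}^nf(\omega),\psi(\omega)\rangle$ and $\limsup_n\langle\mathscr{A}^nf(\omega),\psi(\omega)\rangle$ for $\mP$-a.e.\ $\omega$. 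Combining the fiberwise bound $\lVert\mathscr{A}^nf(\omega)\rVert_\mathfrak{X}\le\mathscr{A}_\sigma^n(\lVert f(\cdot)\rVert_\mathfrak{X})(\omega)$ with Birkhoff's ergodic theorem applied to the $L^1$ function $\omega\mapsto\lVert f(\omega)\rVert_\mathfrak{X}$ (giving $L^1$-convergence and hence uniform integrability) allows Fatou together with its reverse to transfer the fiberwise sandwich to $\Phi(h)\in[\liminf_n\Phi(\mathscr{A}^n f),\limsup_n\Phi(\mathscr{A}^n f)]$, which is enough.

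The main obstacle is that the subsequence $\{n_k(\omega)\}_k$ depends on $\omega$, so no single subsequence witnesses $\mathscr{A}^{n_k}f\to h$ weakly in $L^1(\Omega,\mathfrak{X})$ directly and dominated convergence is unavailable along the full sequence. The Fatou detour above finesses this, but it needs $\Phi$ to admit an $L^\infty(\Omega,\mathfrak{X}^*)$-density, which is automatic only when $\mathfrak{X}^*$ has the Radon--Nikod\'ym property (see the remark after Definition \ref{wpc}). In the general weakly-sequentially-complete case, my fallback is either (i) to first prove global weak precompactness of $\{\mathscr{A}^n f\}_n$ in $L^1(\Omega,\mathfrak{X})$ via a Dunford--Pettis-type criterion, using the uniform integrability above together with fiberwise weak tightness, and then identify every weak cluster point with $h$ by the fiberwise hypothesis, or (ii) to apply Mazur fiberwise to place $h(\omega)\in\overline{\operatorname{co}}\{\mathscr{A}^n f(\omega):n\}$ in $\mathfrak{X}$ and promote this to global membership via a measurable-selection argument for the coefficients. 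The technical crux in either route is bridging the fiberwise and global weak topologies without access to a common convergent subsequence.
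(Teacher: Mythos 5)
Your reduction of the lemma to showing $h\in\overline{\operatorname{co}}\{\mathscr{A}^nf : n\in\mathbb{N}\}$ in $L^1(\Omega,\mathfrak{X})$ via the Abel identity $(\mathscr{P}-\mathscr{I})u_n=f-\mathscr{A}^nf$ is correct and is a clean way to encode the target. The problem is the Mazur--Hahn--Banach verification: the Fatou step runs in the wrong direction. Writing $g_n(\omega)=\langle(\mathscr{A}^nf)(\omega),\psi(\omega)\rangle$, the fiberwise hypothesis does give $\langle h(\omega),\psi(\omega)\rangle\le\limsup_n g_n(\omega)$ a.e., hence $\Phi(h)\le\int_\Omega\limsup_n g_n\,d\mP$. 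But what you then need is $\int_\Omega\limsup_n g_n\,d\mP\le\sup_n\int_\Omega g_n\,d\mP$, and this is false in general. Domination by $\lVert\psi\rVert_\infty\,\mathscr{A}^n_\sigma(\lVert f(\cdot)\rVert_\mathfrak{X})$, whose Birkhoff limit exists in $L^1$ and a.e., gives the reverse Fatou inequality $\limsup_n\int_\Omega g_n\,d\mP\le\int_\Omega\limsup_n g_n\,d\mP$ --- that is, the inequality you can prove goes the opposite way. A counterexample even to the bounded case: on $\Omega=[0,1)$ with Lebesgue measure take $g_n(\omega)=1-\operatorname{sign}\sin(2\pi 2^n\omega)$; then $\int g_n=1$ for all $n$ but $\limsup_n g_n=2$ a.e., so $\int\limsup_n g_n=2>1=\sup_n\int g_n$. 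Uniform integrability does not rescue the direction you need. Compounding this, your identification of $L^1(\Omega,\mathfrak{X})^*$ with $L^\infty(\Omega,\mathfrak{X}^*)$ requires $\mathfrak{X}^*$ to have the Radon--Nikod\'ym property, which fails already for $\mathfrak{X}=L^1(X,m)$ (so $\mathfrak{X}^*=L^\infty$), the case of primary interest.

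The paper avoids both obstructions by arguing fiberwise rather than globally: assuming the conclusion fails, it extracts a positive-measure set $B$ of $\omega$'s with $f(\omega)-h(\omega)\notin E(\omega)\coloneqq\text{cl}\{((\mathscr{P}-\mathscr{I})g)(\omega):g\in L^1(\Omega,\mathfrak{X})\}$, applies Hahn--Banach \emph{inside $\mathfrak{X}$} at such an $\omega$ to produce $q\in\mathfrak{X}^*$ annihilating $E(\omega)$ but not $f(\omega)-h(\omega)$, deduces $\langle(\mathscr{A}^Nf)(\omega),q\rangle=\langle f(\omega),q\rangle$ for all $N$, and lets the $\omega$-dependent subsequence do the work to reach $\langle h(\omega),q\rangle=\langle f(\omega),q\rangle$ --- a contradiction. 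Because the separation happens at a fixed $\omega$ in $\mathfrak{X}$, the representation of $L^1(\Omega,\mathfrak{X})^*$ never enters, and there is no need to integrate a $\limsup$.

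Your fallback (i) --- first promoting fiberwise to global weak precompactness of $\{\mathscr{A}^nf\}_n$ via a Dunford--Pettis-type criterion, then identifying the cluster point with $h$ --- is in fact the sound route and is essentially what the paper itself does in the subsequent part of the proof of Theorem \ref{MET}, citing Corollary 2.4(i) of \cite{G} (Ghenciu) together with weak sequential completeness of $L^1(\Omega,\mathfrak{X})$ from \cite{Ta}. If you pursue that, you should treat it as the main argument rather than a fallback, and be explicit that the identification of the weak cluster point with $h$ still uses the fiberwise hypothesis pointwise. Your fallback (ii) is more delicate: a measurable selection of Mazur coefficients across $\omega$ is exactly the kind of global/fiberwise bridging that the paper's fiberwise Hahn--Banach argument was designed to avoid.
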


\begin{proof}
If this is not true, there exist $\varepsilon>0$ and $f\in L^1(\Omega,\mathfrak{X})$ such that for any $g,r\in L^1(\Omega,\mathfrak{X})$ with $\opnorm{r}_1<{\varepsilon}$, \eqref{HB} does not hold, in other words, there exist $\widetilde{\varepsilon}>0$ and a positive measure set $B$, $\mathbb{P}(B)>0$, such that for any $\omega\in B$,
\begin{align*}
\left\lVert(\mathscr{P}g-g)(\omega)-(f(\omega)-h(\omega))\right\rVert_{\mathfrak{X}}\geq\widetilde{\varepsilon},
\end{align*}
that is, $f(\omega)-h(\omega)\notin E(\omega)$  for any $\omega\in B$ where 
\begin{align*}
E(\omega)\coloneqq\text{cl}\left\{\left((\mathscr{P}-\mathscr{I})g\right)(\omega) : g\in L^1(\Omega,\mathfrak{X})\right\}
\end{align*}
where $\mathscr{I}$ is the identity operator on $L^1(\Omega,\mathfrak{X})$.
Indeed, if we assume $f(\omega)- h(\omega)\in E(\omega)$, there exists  a sequence $\{t_n\}_n\subset E(\omega)$ such that $\displaystyle\lim_{n\to\infty}t_n=f(\omega)- h(\omega)$. Namely, for any $\varepsilon>0$, there is $N$ such that if $n\geq N$ then 
\begin{align*}
\left\lVert (\mathscr{P}g-g)(\omega)-(f(\omega)- h(\omega))\right\rVert_{\mathfrak{X}}<\varepsilon,
\end{align*}
which is contradiction, and then $f(\omega)- h(\omega)\notin E(\omega)$.

Then, the Hahn--Banach theorem (Corollary 13, pp~63 in \cite{DuSch}) leads to that there exists $q\in \mathfrak{X}^*$ such that 
\begin{align}\label{HBeq1}
 \langle f(\omega)- h(\omega),q\rangle \neq 0
\end{align}
and $\langle k,q \rangle=0$ for any $k\in E(\omega)$.
Especially, for any $j=0,1,2,\cdots$, 
\begin{align*}
\left\langle \left((\mathscr{P}-\mathscr{I})\mathscr{P}^jf\right)(\omega),q\right\rangle=0,
\end{align*}
that is,
$\langle(\mathscr{P}^{j+1}f)(\omega),q\rangle=\langle(\mathscr{P}^{j}f)(\omega),q\rangle$. Thus, for any $N\in\mathbb{N}$,
\begin{align*}
\left\langle\left(\mathscr{A}^Nf\right)(\omega),q\right\rangle = \langle f(\omega),q\rangle
\end{align*}
Since $\{\mathscr{A}^nf\}_n$ is fiberwise weakly precompact,
\begin{eqnarray}
\lim_{k\to\infty} 
\left<\left(\mathscr{A}^{n_k(\omega)}f\right)(\omega),q\right>
=\left< h(\omega),q\right>\nonumber
\end{eqnarray}
On the other hand, 
\begin{align*}
\left<\left(\mathscr{A}^{n_k(\omega)}f\right)(\omega),q\right>
&=\frac{1}{n_k(\omega)}\sum_{t=0}^{n_k(\omega)-1}\left<(\mathscr{P}^t f)(\omega),q\right>\\
&=\frac{1}{n_k(\omega)}\sum_{t=0}^{n_k(\omega)-1}\left< f(\omega),q\right>
=\left< f(\omega),q\right>.\nonumber
\end{align*}
Therefore, we have $\langle f(\omega),q\rangle=\langle  h(\omega),q\rangle$  which contradicts to \eqref{HBeq1}. This completes the proof.
\end{proof}

\noindent
{\bf Proof of Theorem \ref{MET}}.
By \eqref{HB}, for $f\in L^1(\Omega,\mathfrak{X})$, there are $g,r\in L^1(\Omega,\mathfrak{X})$ with $\opnorm{r}_1<\varepsilon$ such that for every $k$,
\begin{align*}
\mathscr{P}^kf-\mathscr{P}^kh
=\mathscr{P}^{k+1}g-\mathscr{P}^kg+\mathscr{P}^kr.
\end{align*}
Taking the average of the above equation for $k$ from 0 to $n-1$,
\begin{align*}
\frac{1}{n}\sum_{k=0}^{n-1}\mathscr{P}^kf-\frac{1}{n}\sum_{k=0}^{n-1}\mathscr{P}^kh
=\frac{1}{n}\sum_{k=0}^{n-1}\left(\mathscr{P}^{k+1}g-\mathscr{P}^kg\right)+\frac{1}{n}\sum_{k=0}^{n-1}\mathscr{P}^kr,
\end{align*}
so that
\begin{align*}
\mathscr{A}^nf-\mathscr{A}^nh
=\frac{1}{n}\sum_{k=0}^{n-1}\left(\mathscr{P}^{k+1}g-\mathscr{P}^kg\right)+\mathscr{A}^nr.
\end{align*}
Hence, we have
\begin{align*}
& \left\lVert (\mathscr{A}^nf)(\omega)-(\mathscr{A}^nh)(\omega) \right\rVert_{\mathfrak{X}}\\
&\leq
\frac{1}{n}\left\lVert\sum_{k=0}^{n-1}\left((\mathscr{P}^{k+1}g)(\omega)-(\mathscr{P}^kg)(\omega)\right)\right\rVert_{\mathfrak{X}}
+
\left\lVert (\mathscr{A}^nr)(\omega)\right\rVert_{\mathfrak{X}}\nonumber\\
&\leq 
\frac{1}{n}\left\lVert
\sum_{k=0}^{n-1}P_{\sigma^{-k-1}\omega}^{k+1} g_{\sigma^{-k-1}\omega}
-
\sum_{k=0}^{n-1}P_{\sigma^{-k}\omega}^{k} g_{\sigma^{-k}\omega}
\right\rVert_{\mathfrak{X}}
+\frac{1}{n}\sum_{k=0}^{n-1} \left\lVert P^k_{\sigma^{-k}\omega}r_{\sigma^{-k}\omega}\right\rVert_{\mathfrak{X}}\nonumber\\
&\leq
\frac{1}{n}\left\lVert
P_{\sigma^{-n}\omega}^n g_{\sigma^{-n}\omega}-g_{\omega}
\right\rVert_{\mathfrak{X}}+\frac{1}{n}\sum_{k=0}^{n-1} \left\lVert r_{\sigma^{-k}\omega}\right\rVert_{\mathfrak{X}}.\nonumber
\end{align*}
The first term can be calculated as
\begin{align*}
\frac{1}{n}\left\lVert
P_{\sigma^{-n}\omega}^n g_{\sigma^{-n}\omega}-g_{\omega}
\right\rVert_{\mathfrak{X}}
&\leq
\frac{1}{n}\left(\left\lVert
P_{\sigma^{-n}\omega}^n g_{\sigma^{-n}\omega}\right\rVert_{\mathfrak{X}}
+\left\lVert g_{\omega}
\right\rVert_{\mathfrak{X}} \right)\nonumber\\
&\leq
\frac{1}{n}\left(\left\lVert
 g_{\sigma^{-n}\omega}\right\rVert_{\mathfrak{X}}
+\left\lVert g_{\omega}
\right\rVert_{\mathfrak{X}} \right)
\nonumber
\end{align*}
Since $\left\lVert g_{\omega}\right\rVert_{\mathfrak{X}}$ is bounded, $\left\lVert g_{\omega} \right\rVert_{\mathfrak{X}}/n\to0$ as $n\to\infty$. We must show that $\left\lVert g_{\sigma^{-n}\omega} \right\rVert_{\mathfrak{X}}/n\to0$ as $n\to\infty$. If it is not true, the sequence $\{\left\lVert g_{\sigma^{-n}\omega} \right\rVert_X\}_n$ must diverge as $n\to\infty$ so that the Ces\`{a}ro sum $\frac{1}{N}\sum_{n=0}^{N-1}\left\lVert g_{\sigma^{-n}\omega} \right\rVert_{\mathfrak{X}}$ also diverge as $N\to\infty$. However, by the Birkhoff individual ergodic theorem,
\begin{align*}
\lim_{N\to\infty}\frac{1}{N}\sum_{n=0}^{N-1}\left\lVert g_{\sigma^{-n}\omega} \right\rVert_{\mathfrak{X}}=\int_\Omega \left\lVert g_{\omega} \right\rVert_{\mathfrak{X}} d\mathbb{P}(\omega)=\opnorm{g}_1<\infty,
\end{align*}
which is contradiction. Hence, we have $\left\lVert g_{\sigma^{-n}\omega} \right\rVert_{\mathfrak{X}}/n\to0$ as $n\to\infty$. Moreover,  using again the Birkhoff individual ergodic theorem,
\begin{align*}
\lim_{n\to\infty}\frac{1}{n}\sum_{k=0}^{n-1} \left\lVert r_{\sigma^{-k}\omega}\right\rVert_{\mathfrak{X}}=\int_{\Omega}\left\lVert r_\omega\right\rVert_{\mathfrak{X}}d\mathbb{P}(\omega)=\opnorm{r}_1<\varepsilon.
\end{align*}
Therefore, we can derive
\begin{align}\label{eq4}
\lim_{n\to0}\left\lVert (\mathscr{A}^nf)(\omega)-(\mathscr{A}^nh)(\omega) \right\rVert_{\mathfrak{X}} =0.
\end{align}
Now, we use the similar argument with Lemma \ref{XX}. The boundedness and uniform integrability for $\{\mathscr{A}^nf\}_n$ are immediately shown. Moreover, since $\{\mathscr{A}^nf\}_n$ is fiberwise weakly precompact, that is, $\{(\mathscr{A}^nf)(\omega)\}_n$ is weakly precompact, then $\{\mathscr{A}^nf\}_n$ is weakly Cauchy in $L^1(\Omega,\mathfrak{X})$ from Corollary 2.4 (i) in \cite{G}. By the fact that $L^1(\Omega,\mathfrak{X})$ is weakly sequentially complete if $\mathfrak{X}$ is weakly sequentially complete \cite{Ta}, we have $\{\mathscr{A}^nf\}_n$ is weakly precompact in $L^1(\Omega,\mathfrak{X})$.  Then there exists subsequence $\{n_k\}_k$ such that $\mathscr{A}^{n_k}f$ converges weakly to $h$, and $\mathscr{A}^{n_k}\mathscr{P}f$ converges to $\mathscr{P}h$ since $\mathscr{A}^{n_k}\mathscr{P}f=\mathscr{P}\mathscr{A}^{n_k}f$. Furthermore, 
the calculation
\begin{eqnarray}
\opnorm{\mathscr{A}^nf-\mathscr{A}^n\mathscr{P}f}_1
&=&\frac{1}{n}\opnorm{f-(\mathscr{P}^{n}f)}_1\nonumber\\
&\leq& \frac{1}{n}(\opnorm{f}_1+\opnorm{\mathscr{P}^nf}_1)\nonumber\\
&\leq& \frac{2}{n}\opnorm{f}_1\to0\quad(n\to\infty)\nonumber
\end{eqnarray}
implies $\mathscr{P}h=h$ so that $\mathscr{A}^nh=h$. Then \eqref{eq4} becomes
\begin{align*}
\lim_{n\to0}\left\lVert (\mathscr{A}^nf)(\omega)-h(\omega) \right\rVert_{\mathfrak{X}} =0.
\end{align*}
This completes the proof.

\section*{Acknowledgments}
The authors would like to express our gratitude to Professor Yushi Nakano (Tokai university) for giving us constructive suggestions and warm encouragement.
The authors also thank the anonymous referee for valuable comments which improve the work.
Fumihiko Nakamura was  supported by JSPS KAKENHI Grant Number 	  19K21834.  
Hisayoshi Toyokawa was  supported by JSPS KAKENHI Grant Numbers 19K21834 and 21K20330.


\end{document}